\definecolor{darkblue}{rgb}{0.0,0,0.7} % darkblue color
\newcommand{\darkblue}{\color{darkblue}} % darkblue command
\definecolor{darkred}{rgb}{0.7,0,0} % darkred color
\definecolor{lightgrey}{rgb}{0.7,0.7,0.7} % darkred color
\definecolor{meet}{RGB}{255,205,111}
\definecolor{join}{RGB}{0,77,178}
\newtheorem{theorem}{Theorem}[section]
\newtheorem{proposition}[theorem]{Proposition}
\newtheorem{lemma}[theorem]{Lemma}
\theoremstyle{definition}
\newtheorem{definition}[theorem]{Definition}
\newtheorem{example}[theorem]{Example}
\newtheorem{remark}[theorem]{Remark}
\newcommand{\defn}[1]{\emph{\darkblue #1}}
\newcommand{\pop}{\mathsf{Pop}}
\newcommand{\cpop}{\mathsf{Pop}_{\lozenge}}
\title{Crystal Pop-Stack Sorting and Type $A$ Crystal Lattices}
\author[C.~Defant]{Colin Defant}
\address[C.~Defant]{Princeton University}
\email{cdefant@princeton.edu}
\author[N.~Williams]{Nathan Williams}
\address[N.~Williams]{University of Texas at Dallas}
\email{nathan.williams1@utdallas.edu}
\keywords{}
\subjclass[2000]{}
\begin{document}

\begin{abstract}
Given a complex simple Lie algebra $\mathfrak g$ and a dominant weight $\lambda$, let $\mathcal B_\lambda$ be the crystal poset associated to the irreducible representation of $\mathfrak g$ with highest weight $\lambda$. In the first part of the article, we introduce the \emph{crystal pop-stack sorting operator} $\cpop\colon\mathcal B_\lambda\to\mathcal B_\lambda$, a noninvertible operator whose definition extends that of the pop-stack sorting map and the recently-introduced Coxeter pop-stack sorting operators. Every forward orbit of $\cpop$ contains the minimal element of $\mathcal B_\lambda$, which is fixed by $\cpop$. We prove that the maximum size of a forward orbit of $\cpop$ is the Coxeter number of the Weyl group of $\mathfrak g$. In the second part of the article, we characterize exactly when a type $A$ crystal is a lattice.
\end{abstract}

\maketitle

\ytableausetup{smalltableaux}

\begin{comment}
FYI, we can draw diagrams using \ydiagram{3,2,1}

and tableaux using 
\begin{ytableau}
1 & 1 & 1 \\
2 & 2 \\
3
\end{ytableau}.  To vertically center in text, I like \raisebox{.5\height}{\begin{ytableau}
1 & 1 & 1 \\
2 & 2 \\
3
\end{ytableau}}.
\end{comment}

\section{Introduction}

Let $\mathfrak S_n$ denote the symmetric group on $n$ letters, whose elements are permutations of the set $[n]:=\{1,\ldots,n\}$. Our story begins with the \defn{pop-stack sorting map}, the operator $\pop\colon \mathfrak S_{n}\to\mathfrak S_{n}$ that acts by reversing the descending runs (i.e., maximal consecutive decreasing subsequences) of a permutation. For example, the descending runs of $532481976$ are $532$, $4$, $81$, and $976$, so $\pop(532481976)=235418679$. Despite its simple definition, pop-stack sorting on permutations exhibits complicated dynamical properties. It is a deterministic variant of a pop-stack sorting machine introduced by Avis and Newborn \cite{Avis}, and it has recently received attention from a number of enumerative combinatorialists \cite{AlbertVatter, Asinowski, Asinowski2, Elder, ClaessonPop, ClaessonPop2, Pudwell}. 

Given a set $X$, a function $f\colon X\to X$, and an element $x\in X$, we define the \defn{forward orbit} of $x$ under $f$ to be the set $O_f(x)=\{x,f(x),f^2(x),\ldots\}$, where $f^i$ denotes the $i$-th iterate of $f$. Every forward orbit of a permutation under the map $\pop\colon\mathfrak S_n\to\mathfrak S_n$ contains the identity element $e$, which is fixed by $\pop$. Thus, for $w\in\mathfrak S_n$, $|O_\pop(w)|-1$ is the number of iterations of $\pop$ needed to ``sort'' $w$ into the identity permutation. The first appearance of the pop-stack sorting map was in a paper by Ungar on discrete geometry \cite{Ungar}, where he proved the surprisingly nontrivial fact that \[\max_{w\in\mathfrak S_n}|O_\pop(w)|=n.\]
This theorem was recently reproven by Albert and Vatter \cite{AlbertVatter}.

Suppose $W$ is a finite irreducible Coxeter group with Coxeter number $h$. The first author \cite{defant2021stack} defined the \defn{Coxeter pop-stack sorting operator} $\pop_W\colon W\to W$ by 
\begin{equation}\label{eqn:CoxPopDef1}
\pop_W(w)=w\cdot w_0(D_{\mathsf R}(w))^{-1},
\end{equation}
where $w_0(D_{\mathsf R}(w))$ is the longest element of the parabolic subgroup of $W$ generated by the right descent set of $w$. An equivalent definition is given by 
\begin{equation}\label{eqn:CoxPopDef2}
\pop_W(w)=\bigwedge(\{x\in W:x\lessdot_{\mathsf R} w\}\cup\{w\}),
\end{equation}
where the cover relations and the meet are taken in the right weak order on $W$. The latter definition extends naturally to arbitrary complete meet-semilattices, yielding the notion of a \defn{semilattice pop-stack sorting operator} that the first author explored in \cite{DefantMeeting}. The reason for using the name ``pop-stack sorting'' for these operators comes from the fact that $\pop_{\mathfrak S_n}$ coincides with the original pop-stack sorting map. In our previous article \cite{DefantWilliamsTorsing}, we defined and studied ``dual'' versions of the Coxeter pop-stack sorting operators that we called \emph{Coxeter pop-tsack torsing operators}.  

In all of these settings, one of the primary points of interest is the maximum size of a forward orbit of the operator in question. For example, in \cite{defant2021stack}, the first author proved a generalization of Ungar's theorem to an arbitrary finite irreducible Coxeter group $W$ by showing that 
\begin{equation}\label{eqn:CoxPopMain}
\max_{w\in W}|O_{\pop_W}(w)|=h,
\end{equation} where $h$ is the Coxeter number of $W$. Results of similar flavors were obtained for semilattice pop-stack sorting operators on $\nu$-Tamari lattices in \cite{DefantMeeting} and for (some) Coxeter pop-tsack torsing operators in \cite{DefantWilliamsTorsing}. 

\medskip 

Let $\mathfrak g$ be a complex semisimple Lie algebra. Given an indeterminate $q$, there is a \emph{quantum group} $U_q(\mathfrak g)$, which is obtained as a deformation of the universal enveloping algebra $U(\mathfrak g)$. Associated to each dominant weight $\lambda$ is a \defn{crystal}---a special finite poset $\mathcal B_\lambda$ that encodes crucial information about the irreducible representation $V^\lambda$ of $U_q(\mathfrak g)$ with highest weight $\lambda$. The underlying set of this poset is the crystal basis of $V^\lambda$, and the edges of the Hasse diagram of $\mathcal B_\lambda$ are colored in a manner that reflects the action of the Chevalley generators $e_i$ and $f_i$ of $U_q(\mathfrak g)$. The unique minimal element of $\mathcal B_\lambda$ is $v_\lambda$, the highest-weight vector of $V^\lambda$. There is a natural (right) action of the Weyl group $W$ of $\mathfrak g$ on $\mathcal B_\lambda$ (where the simple generator $s_i$ reverses all monochromatic saturated chains with color $i$), which gives an embedding of the right weak order on a parabolic quotient of $W$ into $\mathcal B_\lambda$ as the $W$-orbit of $v_\lambda$.  An example is given in~\Cref{fig:weak_crystal}. It is reasonable to ask what poset-theoretic properties are preserved under this embedding.  In~\cite{hersh2017weak}, Hersh and Lenart study this analogy from a poset-topological point of view.

In this article, we further generalize the Coxeter pop-stack sorting operators to the realm of crystals by defining the \emph{crystal pop-stack sorting operator} $\cpop\colon\mathcal B_\lambda\to\mathcal B_\lambda$. Suppose $b\in\mathcal B_\lambda$, and consider the set $b_\downarrow$ of colors of the edges in $\mathcal B_\lambda$ of the form $b'\lessdot b$. One can think of the colors in $b_\downarrow$ as the ``descents'' of $b$. Roughly speaking, $\cpop(b)$ is obtained by starting at $b$ and then walking down edges whose colors are in $b_\downarrow$ until one cannot walk down any further (see \Cref{subsec:crystals} for the formal definition). This definition mimics that of the Coxeter pop-stack sorting operators given in \eqref{eqn:CoxPopDef1}. Indeed, $\cpop$ coincides with $\pop_W$ on the aforementioned embedding of a parabolic quotient of $W$ into $\mathcal B_\lambda$. Every forward orbit of $\cpop$ contains $v_\lambda$, which is fixed by $\cpop$. Our main result about crystal pop-stack sorting operators extends \eqref{eqn:CoxPopMain}, which was one of the main theorems from \cite{defant2021stack}. 

\begin{theorem}\label{thm:popmain}
Let $\mathfrak g$ be a complex simple Lie algebra with Weyl group $W$. If $\lambda$ is a nonzero dominant weight in the weight lattice of $\mathfrak g$, then \[\max_{b\in\mathcal B_\lambda}|O_{\cpop}(b)|=h,\] where $h$ is the Coxeter number of $W$. 
\end{theorem}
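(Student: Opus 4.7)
The proof naturally splits into a lower bound (some $b$ attains $|O_{\cpop}(b)| = h$) and an upper bound (no $b$ exceeds $h$). For the lower bound, I would take $b_\star = w_0^J \cdot v_\lambda$, where $W_J$ is the stabilizer of $\lambda$ and $w_0^J$ denotes the longest minimal coset representative of $W/W_J$. The excerpt already tells us that $\cpop$ restricts to $\pop_W$ on the image of $W/W_J$ embedded into $\mathcal{B}_\lambda$ via the Weyl orbit of $v_\lambda$, so this reduces to the purely Coxeter-theoretic claim that $|O_{\pop_W}(w_0^J)| = h$. I would verify this by iterating (\ref{eqn:CoxPopDef1}) starting from $w_0^J = w_0 \cdot w_0(J)$ and tracking how its right descent sets shrink; since $D_{\mathsf R}(w_0^J) = S \setminus J$, the initial step of the iteration is controlled, and the subsequent descent sequence should parallel the orbit of a Coxeter element as in the proof of (\ref{eqn:CoxPopMain}) in \cite{defant2021stack}.

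For the upper bound, my goal is to bound $|O_\cpop(b)|$ by the length of a $\pop_W$-orbit in $W$. Given $b$, I would associate a Weyl group (or parabolic quotient) element $\phi(b)$ whose right descent set encodes the colors in $b_\downarrow$---for example, by choosing a saturated chain from $v_\lambda$ to $b$ and reading off its edge-colors---and then attempt to construct a $W$-equivariant, order-preserving map $\phi\colon \mathcal{B}_\lambda \to W$ (or onto a parabolic quotient) which intertwines $\cpop$ with $\pop_W$, or at worst satisfies the inequality $|O_{\cpop}(b)| \leq |O_{\pop_W}(\phi(b))|$. Granted such an encoding, (\ref{eqn:CoxPopMain}) immediately supplies $|O_\cpop(b)| \leq h$. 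A reasonable alternative is to work directly with the weight function: track the strict decrease in some $\rho^\vee$-pairing statistic per step, but calibrated so that $\cpop$-steps are ``large'' and the total drop is at most $h-1$.

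The principal obstacle is that the arguments of \cite{defant2021stack} exploit the lattice structure of weak order: definitions (\ref{eqn:CoxPopDef1}) and (\ref{eqn:CoxPopDef2}) invoke meets of lower covers, and the Coxeter-number bound is controlled using lattice-theoretic properties. But $\mathcal{B}_\lambda$ is generally not a lattice---indeed, the second part of this paper characterizes exactly when it is one in type $A$---so the walking-down description of $\cpop$ must substitute for the absent meet. The technical heart of the proof, therefore, is to show that this walk is well-defined and well-behaved (presumably via Stembridge-style local axioms or diamond-closure for crystals), and then compatible enough with the $W$-action for the $\phi$-encoding to be meaningful. If a direct uniform correspondence proves too rigid, I would fall back on induction: remove a node from the Dynkin diagram of $\mathfrak g$, restrict $\mathcal{B}_\lambda$ to the resulting subcrystal, and show that $\cpop$-orbits are controlled by the branching to smaller crystals, using the classification of complex simple Lie algebras only as a last resort.
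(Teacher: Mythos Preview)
Your architecture matches the paper's: lower bound via the Weyl-orbit embedding $\prescript{K}{}\!W\hookrightarrow\mathcal B_\lambda$, upper bound via a projection to $W$ compatible with pop-stack sorting. The map $\phi$ you seek is Littelmann's \emph{key map} $\kappa\colon\mathcal B_\lambda\to\prescript{K}{}\!W$ (not an ad hoc path-reading), and the paper establishes exactly the one-step inequality you anticipate, namely $\kappa(\cpop(b))\leq_{\mathsf R}\pop_W(\kappa(b))$. A minor adjustment on the lower bound: the element whose $\pop_W$-orbit is known from \cite{defant2021stack} to have size $h$ is $\prescript{J}{}\!w_0$ for $J=S\setminus\{s\}$ with $s\in S\setminus K$, not the full $\prescript{K}{}\!w_0$ you propose; since $\prescript{J}{}\!W\subseteq\prescript{K}{}\!W$, this element still lies in the Weyl orbit inside $\mathcal B_\lambda$, so the lower bound goes through with that substitution.

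There is, however, a real gap in your upper bound. You assert that from an inequality like $\kappa(\cpop(b))\leq_{\mathsf R}\pop_W(\kappa(b))$, or from $|O_{\cpop}(b)|\leq|O_{\pop_W}(\phi(b))|$, the bound follows ``immediately'' via \eqref{eqn:CoxPopMain}. It does not: $\pop_W$ is \emph{not} order-preserving on weak order, so the one-step inequality does not iterate to $\kappa(\cpop^t(b))\leq_{\mathsf R}\pop_W^t(\kappa(b))$, and there is no direct comparison of orbit lengths. The paper's fix is to project further to a maximal parabolic quotient $\prescript{J}{}\!W$ and compare in \emph{Bruhat} order against the specific orbit of $\prescript{J}{}\!w_0$. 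The crucial input, drawn from \cite{defant2021stack}, is that every iterate $\pop_W^t(\prescript{J}{}\!w_0)$ has pairwise-commuting right descents, and $\pop_W$ \emph{is} Bruhat-monotone below elements with commuting descents. Combining this with monotonicity of parabolic projection and the key-map inequality yields, by induction on $t$, that $\prescript{J}{}\!(\kappa(\cpop^t(b)))\leq_{\mathsf B}\pop_W^t(\prescript{J}{}\!w_0)$; at $t=h-1$ the right side is $e$, forcing $\kappa(\cpop^{h-1}(b))=e$ and hence $\cpop^{h-1}(b)=v_\lambda$. Your proposed fallbacks (Stembridge local axioms, a $\rho^\vee$-pairing drop, Dynkin induction) do not obviously supply this monotonicity, which is the heart of the argument.
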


If the crystal $\mathcal B_\lambda$ happens to be a lattice, then it comes equipped with two pop-stack sorting operators: the semilattice pop-stack sorting operator defined in \cite{DefantMeeting} and the crystal pop-stack sorting operator $\cpop$. These two operators need not coincide; for example, they differ on the type A crystal $\mathcal B_{(2,1)}^2$ depicted in \Cref{fig:weak_crystal}. In~\cite[Example 7.4]{hersh2017weak}, Hersh and Lenart observe that crystal posets are not always lattices---and since these two pop-stack sorting operators are not necessarily equal when a crystal is a lattice (an example of this difference is given by the crystal $\mathcal{B}_{(2,1)}^2$ on the right of~\Cref{fig:weak_crystal}), we are led to the problem of characterizing which crystals are lattices.

\tikzset{>=stealth}
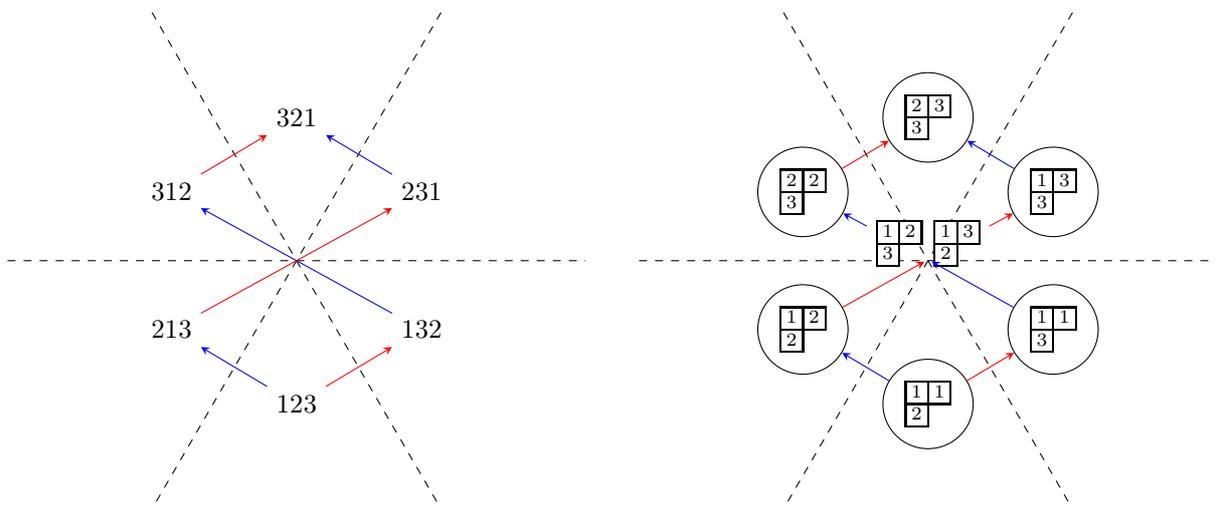
\begin{figure}[htbp]
\begin{tikzpicture}[scale=3.8]
\draw [dashed] (-1,0) -- (1,0);
\draw [dashed] (.5,.866) -- (-.5,-.866);
\draw [dashed] (-.5,.866) -- (.5,-.866);
\node (123) at (0,-.5) {123};
\node (132) at (.433,-.24) {132};
\node (213) at (-.433,-.24) {213};
\node (312) at (.433,.24) {231};
\node (231) at (-.433,.24) {312};
\node (321) at (0,.5) {321};
\draw [red,->] (123) -- (132);
\draw [blue,->] (132) -- (231);
\draw [red,->] (231) -- (321);
\draw [blue,->] (123) -- (213);
\draw [red,->] (213) -- (312);
\draw [blue,->] (312) -- (321);
\end{tikzpicture} \hfill
\begin{tikzpicture}[scale=3.8]
\draw [dashed] (-1,0) -- (1,0);
\draw [dashed] (.5,.866) -- (-.5,-.866);
\draw [dashed] (-.5,.866) -- (.5,-.866);
\node[circle,draw] (112) at (0,-.5) {\begin{ytableau} 1 & 1 \\ 2  \end{ytableau}};
\node[circle,draw] (113) at (.433,-.24) {\begin{ytableau} 1 & 1 \\ 3  \end{ytableau}};
\node[circle,draw] (122) at (-.433,-.24) {\begin{ytableau} 1 & 2 \\ 2  \end{ytableau}};

\node (123) at (-.1,.06) {\begin{ytableau} *(white) 1 & *(white)2 \\ *(white)3  \end{ytableau}};
\node (132) at (.1,.06) {\begin{ytableau} *(white)1 & *(white)3 \\ *(white)2  \end{ytableau}};

\node[circle,draw] (133) at (.433,.24) {\begin{ytableau} 1 & 3 \\ 3  \end{ytableau}};
\node[circle,draw] (223) at (-.433,.24) {\begin{ytableau} 2 & 2 \\ 3  \end{ytableau}};
\node[circle,draw] (233) at (0,.5) {\begin{ytableau} 2 & 3 \\ 3  \end{ytableau}};

\draw [red,->] (112) -- (113);
\draw [blue,->] (113) -- (123);
\draw [blue,->] (123) -- (223);
\draw [red,->] (223) -- (233);

\draw [blue,->] (112) -- (122);
\draw [red,->] (122) -- (132);
\draw [red,->] (132) -- (133);
\draw [blue,->] (133) -- (233);
\end{tikzpicture}
\caption{On the left is the right weak order on $\mathfrak{S}_3$.  On the right is the crystal $\mathcal{B}_{(2,1)}^2$.  The elements in the $\mathfrak{S}_3$-orbit of the highest-weight vector of $\mathcal{B}_{(2,1)}^2$ are circled.}
\label{fig:weak_crystal}
\end{figure}

\medskip

In this paper, we characterize exactly when $\mathcal B_\lambda^n$ is a lattice in the case of the complex simple Lie algebra $\mathfrak{sl}_{n+1}$ with Weyl group $\mathfrak S_{n+1}$. A partition $\lambda$ with at most $n$ parts can be viewed as a dominant weight for $\mathfrak{sl}_{n+1}$.  Note that we adopt the standard notation in which superscripts denote repeated parts in a partition (e.g., $(2^3,1)$ is shorthand for $(2,2,2,1)$).  To stress the dependence on $n$, we write $\mathcal B_\lambda^n$ for the associated crystal---such a crystal is said to be of type~$A_{n}$. Elements of $\mathcal B_\lambda^n$ are indexed by semistandard Young tableaux of shape $\lambda$ and maximum entry at most $n+1$.

\begin{theorem}\label{thm:main}
Every crystal of type $A_1$ or $A_2$ is a lattice. Suppose $n\geq 3$, and let $\lambda$ be a partition with at most $n$ parts. The crystal $\mathcal B_\lambda^n$ is a lattice if and only if at least one of the following conditions holds: 
\begin{itemize}
    \item $\lambda=(1^m)$ for some $0\leq m\leq n$;
    \item $\lambda=(2,1^m)$ for some $1\leq m\leq n-1$;
    \item $\lambda=(2^{n-m},1^m)$ for some $1\leq m\leq n-1$;
    \item $\lambda=(k)$ for some $k\geq 1$;
    \item $\lambda=(k^n)$ for some $k\geq 1$; 
    \item $\lambda=(k,1)$ for some $k\geq 1$;
    \item $\lambda=(k^{n-1},k-1)$ for some $k\geq 1$;
    \item $\lambda=(3,2,1)$ and $n=3$.
\end{itemize}
\end{theorem}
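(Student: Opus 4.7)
The proof naturally splits into the sufficiency direction (each listed $\lambda$ yields a lattice) and the necessity direction (no other $\lambda$ does). The rank-$1$ and rank-$2$ statements I would handle separately: every $\mathcal{B}_\lambda^1$ is a chain, and each $\mathcal{B}_\lambda^2$ embeds into the weight lattice of $A_2$ in a planar fashion with at most one element per weight, so that the lattice property can be verified geometrically.

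For the sufficiency direction with $n \geq 3$, I would treat each listed family in turn, starting with the simplest. The single-column case $\lambda = (1^m)$ identifies $\mathcal{B}_\lambda^n$ with the distributive lattice of $m$-element subsets of $[n+1]$ ordered componentwise, and the single-row case $\lambda = (k)$ identifies it with a natural lattice on weak compositions of $k$ into $n+1$ parts. The rectangular case $\lambda = (k^n)$ reduces to a single-row crystal via the complementation involution on semistandard Young tableaux with at most $n$ rows. For the remaining families $(2,1^m)$, $(2^{n-m},1^m)$, $(k,1)$, and $(k^{n-1},k-1)$, I would exploit the fact that these shapes have either very few columns or very few ``interior'' boxes to give explicit formulas for the meet and join of two tableaux directly in terms of their entries. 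The staircase $\lambda = (3,2,1)$ with $n=3$ is a finite check.

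For the necessity direction, the strategy is to exhibit, for each $\lambda$ not on the list, an explicit pair of semistandard tableaux $T, T' \in \mathcal{B}_\lambda^n$ whose set of common lower bounds admits no maximum element (or, dually, whose common upper bounds have no minimum). I expect the cleanest argument proceeds by identifying a small collection of minimal ``forbidden'' shapes---likely including $(2,2)$ and $(3,1,1)$ in $n=3$, together with a few additional small partitions---and then showing that any unlisted $\lambda$ contains such a shape in a way that embeds a non-lattice sub-crystal into $\mathcal{B}_\lambda^n$. For each forbidden pattern $\mu$, I would exhibit two witness tableaux of shape $\mu$ whose meet fails; for larger $\lambda$ containing $\mu$, the witnesses are extended by padding the remaining boxes with forced entries (e.g., ones in the top-left and maximal entries in the bottom-right) so that these entries are constant across all candidate common lower bounds.

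The main obstacle will be the necessity direction, and specifically the padding step. The danger is that the extra boxes in $\lambda \setminus \mu$ could introduce new common lower bounds that restore a meet which was absent in $\mathcal{B}_\mu^{n'}$. Ruling this out requires a careful analysis of how the crystal operators $f_i$ interact with the padded entries; I expect the argument hinges on the signature-rule description of tensor products of type $A$ crystals, verifying that the padding contributes a constant factor that cannot create new order relations. Establishing this uniformly across all the unlisted shapes, and correctly enumerating the minimal forbidden patterns so that every unlisted $\lambda$ contains one, is the real technical core of the proof.
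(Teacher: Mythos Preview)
Your $A_2$ argument contains a factual error: type $A_2$ crystals do \emph{not} have at most one element per weight. Already $\mathcal B_{(2,1)}^2$ (the adjoint crystal) has two elements of weight zero, the tableaux $\begin{smallmatrix}1&2\\3\end{smallmatrix}$ and $\begin{smallmatrix}1&3\\2\end{smallmatrix}$, so the planar ``one element per weight'' verification you propose cannot work. The paper handles $A_2$ by citing a product decomposition due to Danilov--Karzanov--Koshevoy; you will need either that or some other structural argument.

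For the hook families $(2,1^m)$, $(k,1)$ and their duals, your plan of writing down explicit meet/join formulas is more optimistic than what the paper actually achieves: there is no closed formula, and the paper instead runs a double induction (on $n$ and on the hook parameter), identifying certain intervals of $\mathcal B_\lambda^n$ with smaller crystals of the same kind and doing a genuine case analysis on where the entry $n+1$ sits. On the necessity side your intuition is in the right direction, but the paper makes the ``padding'' step precise in a way that avoids the signature-rule worry you flag: it proves that deleting outer rows, deleting leftmost columns, or decreasing $n$ each realizes the smaller crystal as an \emph{interval} of the larger one, so non-lattice-ness transfers automatically. The witnesses themselves are not organized around a fixed list of forbidden shapes but around five shape conditions (e.g.\ $\lambda_p>\lambda_{p+1}\geq\lambda_{p+2}\geq 2$), and in each case the obstruction is a ``bowtie'': four tableaux $T_1,T_2,U_1,U_2$ with $T_1\lessdot U_1$, all $T_i\leq U_j$, and both pairs $\{T_1,T_2\}$, $\{U_1,U_2\}$ incomparable, which immediately kills the join of $T_1$ and $T_2$.
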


\begin{remark}
Since the weak order on a finite Weyl groups is always a lattice, it is natural to consider the problem of characterizing when crystals of other Cartan types are lattices.  We do not attempt such a characterization here.
\end{remark}

The organization of the paper is as follows. \Cref{sec:background} establishes background information on posets, Coxeter groups, crystals, and tableaux. We also define crystal pop-stack sorting operators (in a fairly general context) in \Cref{subsec:crystals}. In \Cref{sec:pop}, we prove \Cref{thm:popmain}. \Cref{sec:lattices} and \Cref{sec:notlattices} are devoted to proving \Cref{thm:main}.

\section{Background}\label{sec:background}

\subsection{Posets and Lattices}
We assume basic familiarity with the theory of posets, as outlined in \cite[Chapter~3]{StanleyEC1}. Suppose $P$ is a poset. For $x,y\in P$, we say $y$ \defn{covers} $x$ and write $x\lessdot y$ if $x<y$ and there does not exist $z\in P$ satisfying $x<z<y$. A \defn{saturated chain} in $P$ is a sequence of the form $x_1\lessdot x_2\lessdot\cdots\lessdot x_m$. An \defn{order ideal} of $P$ is a set $X\subseteq P$ such that if $y\in X$ and $x\leq y$, then $x\in X$. If two elements $x,y\in P$ have a greatest lower bound, then that element is called their \defn{meet} and is denoted $x\wedge y$. If $x$ and $y$ have a least upper bound, then that element is called their \defn{join} and is denoted $x\vee y$. The poset $P$ is a \defn{lattice} if $x\wedge y$ and $x\vee y$ exist for all $x,y\in P$. We write $\bigwedge X$ for the meet (i.e., greatest lower bound) of a set $X\subseteq P$. 

\subsection{Coxeter Groups}

We assume the reader is familiar with the basic aspects of Coxeter groups, which are treated in \cite{BjornerBrenti}. Suppose $(W,S)$ is a Coxeter system. We let $e$ denote the identity element of $W$. The \defn{length} of $w$, denoted $\ell(w)$, is the smallest length of a word $s_1\cdots s_k$ over the alphabet $S$ that, when viewed as a product of elements of $W$, equals $w$. A word of length $\ell(w)$ over $S$ that represents $w$ is called a \defn{reduced word} for $w$. Given $w_1,\ldots,w_r\in W$ with $w_1\cdots w_r=w$, we say the factorization $w=w_1\cdots w_r$ is \defn{length-additive} if $\ell(w)=\ell(w_1)+\cdots+\ell(w_r)$. A \defn{right descent} (respectively, \defn{left descent}) of $w$ is an element $s\in S$ such that $\ell(ws)<\ell(w)$ (respectively, $\ell(sw)<\ell(w)$). We write $D_{\mathsf R}(w)$ and $D_{\mathsf L}(w)$ for the set of right descents of $w$ and the set of left descents of $w$, respectively. There is a unique longest element of $W$, which we denote by $w_0$. 

The \defn{right weak order} on $W$ is the partial order $\leq_{\mathsf R}$ on $W$ with $x\leq_{\mathsf R} y$ if and only if $\ell(x^{-1}y)=\ell(y)-\ell(x)$. The cover relations in the right weak order are the relations of the form $ws\lessdot w$ with $s\in D_{\mathsf R}(w)$. The \defn{left weak order} on $W$ is the partial order $\leq_{\mathsf L}$ on $W$ with $x\leq_{\mathsf L} y$ if and only if $\ell(yx^{-1})=\ell(y)-\ell(x)$. The left and right weak orders are naturally isomorphic via the map $w\mapsto w^{-1}$. The \defn{Bruhat order} on $W$ is the partial order $\leq_{\mathsf B}$ on $W$ defined by saying that $x\leq y$ if some (equivalently, every) reduced word for $y$ contains a reduced word for $x$ as a (not necessarily contiguous) subword. The Bruhat order is an extension of the right weak order; this means that $x\leq_{\mathsf R} y$ implies $x\leq_{\mathsf B} y$. The right weak order for $\mathfrak{S}_3$ is illustrated on the left of~\Cref{fig:weak_crystal}. A fundamental result due to Bj\"orner \cite{BjornerWeak} states that the right weak order on a finite Coxeter group $W$ is a lattice.

For $J\subseteq S$, the \defn{parabolic subgroup} $W_J$ is the subgroup of $W$ generated by $J$. The longest element of $W_J$ is denoted by $w_0(J)$. For each $w\in W$, there is a unique representative of the coset $W_Jw$ that has minimum length among all elements of the coset; we denote this minimum-length representative by $\prescript{J}{}\!w$. The set $\prescript{J}{}\!W=\{\prescript{J}{}\!w:w\in W\}$ is called a \defn{parabolic quotient}; it has an alternative description as $\prescript{J}{}\!W=\{x\in W:D_{\mathsf L}(x)\subseteq S\setminus J\}$. We consider the right weak order $\leq_{\mathsf R}$ on $\prescript{J}{}\!W$, which is simply the order inherited from the right weak order on $W$. In fact, $\prescript{J}{}\!W$ is an order ideal of $W$ in the right weak order \cite[Proposition~2.5]{Stembridge}.

\subsection{Crystals}\label{subsec:crystals}
We will only provide a brief discussion of crystals, referring the reader to \cite{hersh2017weak, bump2017crystal, kashiwara1991crystal} for further details. Let us begin with a fairly abstract definition of a crystal. Let $I$ be a set; we view the elements of $I$ as colors. Let $B$ be a directed graph in which each edge is colored with exactly one of the elements of $I$. A directed path in $B$ is called \defn{monochromatic} if all of its edges have the same color. We say $B$ is a \defn{crystal} if it satisfies the following two properties:
\begin{itemize}
    \item Every monochromatic directed path in $B$ has finite length.
    \item For every color $i\in I$ and every vertex $b\in B$, there is at most one edge of the form $b'\to b$ with color $i$ and at most one edge of the form $b\to b''$ with color $i$.
\end{itemize}

Suppose $B$ is a crystal with color set $I$. Given a vertex $b$ of $B$, we write $b_\downarrow$ for the set of colors of edges of the form $b'\to b$. For $J\subseteq I$, let $B\vert_J$ be the crystal obtained from $B$ by deleting all edges whose colors are not in $J$. Recall that a \defn{source} of a directed graph is a vertex of in-degree $0$. 

\begin{definition}
Let $B$ be a crystal with color set $I$. We say $B$ is \defn{poppable} if for every $J\subseteq I$, every connected component of $B\vert_J$ has a unique source. If $B$ is poppable, then we define the \defn{crystal pop-stack sorting operator} $\cpop\colon B\to B$ by declaring $\cpop(b)$ to be the unique source of the connected component of $B\vert_{b_\downarrow}$ containing $b$.
\end{definition}

We now specialize our attention to crystals arising from irreducible representations of semisimple Lie algebras. Let $\mathfrak g$ be a complex semisimple Lie algebra with root system $\Phi$ and weight lattice $\Lambda$. We denote by $\Phi^+$ a choice of positve roots in $\Phi$ and by $\Lambda^+$ the associated set of dominant weights in $\Lambda$. Let $I$ be an indexing set such that $\{\alpha_i:i\in I\}\subseteq\Phi^+$ is the set of simple roots. For each $i\in I$, we write $s_i$ for the reflection through the hyperplace orthogonal to $\alpha_i$. The \defn{Weyl group} of $\mathfrak g$ is the group $W$ generated by the reflections $s_i$  for $i\in I$. The pair $(W,S)$ is a Coxeter system, where $S=\{s_i:i\in I\}$. 

For each $\lambda\in\Lambda^+$, let $V^\lambda$ be the irreducible representation of $\mathfrak g$ with highest weight $\lambda$. There is a unique (up to a nonzero scalar multiple) highest-weight vector $v_\lambda$, which is a vector in $V^\lambda$ with weight $\lambda$. Let $q$ be an indeterminate. There is a certain \emph{quantum group} $U_q(\mathfrak g)$ obtained as a deformation of the universal enveloping algebra $U(\mathfrak g)$. We can view $V^\lambda$ as a representation of $U_q(\mathfrak g)$. The \emph{Kashiwara operators} $E_i$ and $F_i$ are certain deformations of the Chevalley generators $e_i$ and $f_i$ that act on $V^\lambda$ in the $q\to 0$ limit, and the \defn{crystal basis} of $V^\lambda$ is obtained by taking the orbit of $v_\lambda$ under this action. 
This crystal basis forms the set of vertices of a crystal that we denote by $\mathcal B_\lambda$. This crystal is actually a finite poset, where each edge $b\to b'$ becomes a cover relation $b\lessdot b'$ (and these are all the cover relations). In what follows, these are the only types of crystals that we will consider, so we will often replace graph-theoretic language with the corresponding poset-theoretic language. Rather than define $\mathcal B_\lambda$ formally, let us simply state its properties that we will need and refer the reader to \cite{bump2017crystal, Kashiwara95oncrystal} for a more thorough treatment. 

First, $\mathcal B_\lambda$ has a unique minimal element and a unique maximal element; the minimal element is the highest-weight vector $v_\lambda$. The set of colors used to color the edges of $\mathcal B_\lambda$ is $I$ (the set that indexes the simple roots). Suppose $b\in \mathcal B_\lambda$ and $i\in I$. If there exists an edge of the form $b\lessdot b'$ with color $i$, then we define $F_i(b)=b'$ (this is well-defined by the definition of a crystal); otherwise, we set $F_i(b)=0$. This defines the \defn{lowering operator} $F_i\colon\mathcal B_\lambda\to\mathcal B_\lambda\cup\{0\}$. Similarly, we define the \defn{raising operator} $E_i\colon\mathcal B_\lambda\to\mathcal B_\lambda\cup\{0\}$ by letting $E_i(b)=b''$ if there exists an edge $b''\lessdot b$ with color $i$ and setting $E_i(b)=0$ otherwise. 

The Weyl group $W$ acts naturally on the weight space $\Lambda$. The stabilizer of $\lambda$ under this action is necessarily a parabolic subgroup of the form $W_K$ for some $K\subseteq S$. We will use the unusual convention that $W$ acts on $\Lambda$ on the right; thus, we compute $\mu\cdot (s_{i_1}s_{i_2}\cdots s_{i_r})$ by first applying the reflection $s_{i_1}$ to $\mu$, then applying the reflection $s_{i_2}$ to the resulting weight, and so on. There is also an interesting right action of $W$ on $\mathcal B_\lambda$ defined as follows. Suppose $b\in\mathcal B_\lambda$ and $i\in I$. Let $x_1\lessdot \cdots\lessdot x_m$ be the longest monochromatic saturated chain that contains $b$ and whose edges have color $i$. If $b=x_j$, then we define $b\cdot s_i$ to be $x_{m+1-j}$. Thus, $s_i$ sends the elements $x_1,\ldots,x_m$ to $x_m,\ldots,x_1$, respectively; we refer to this process as \defn{reversing an $i$-chain}. It is known that this definition extends to an action of all of $W$ on $\mathcal B_\lambda$~\cite[Section 11]{Kashiwara95oncrystal}. Furthermore, there is an injective map $\prescript{K}{}\!W\to\mathcal B_\lambda$ given by $w\mapsto v_\lambda\cdot w$. This map is actually a poset embedding of the right weak order on $\prescript{K}{}\!W$ into $\mathcal B_\lambda$; in other words, for $w,w'\in\prescript{K}{}\!W$, we have $w\leq_{\mathsf R}w'$ if and only if $v_\lambda\cdot w\leq v_\lambda\cdot w'$~\cite[Remark 2.12]{hersh2017weak}. We will often identify $\prescript{K}{}\!W$ with its image under this embedding.  

\begin{remark}
It might seem more natural to consider left actions of $W$ on $\Lambda$ and $\mathcal B_\lambda$ instead of right actions, and indeed, this is what is typically done in the literature on crystals. However, using a left action leads to an embedding of the left weak order on $(\prescript{K}{}\!W)^{-1}=\{w^{-1}:w\in\prescript{K}{}\!W\}$ instead of an embedding of the right weak order on $\prescript{K}{}\!W$. We have chosen to use right actions and the right weak order so that our conventions match with those used to define the pop-stack sorting map and the Coxeter pop-stack sorting operators in the introduction. This does not lead to any substantive issues because we can simply take inverses of Weyl group elements whenever we want to translate results from the crystal literature that are phrased in terms of left actions and the left weak order. \hfill $\triangle$
\end{remark}

Each element $b\in\mathcal B_\lambda$ is a weight vector in $V^\lambda$ with some weight $\text{wt}(b)\in\Lambda$; for $\mu\in\Lambda$, the set $\{b\in\mathcal B_\lambda:\text{wt}(b)=\mu\}$ is a basis for the weight space of $V^\lambda$ with weight $\mu$. In particular, $\{\text{wt}(b):b\in\mathcal B_\lambda\}$ is the set of weights for $V^\lambda$; it is well known that this set is equal to $\text{conv}(\lambda\cdot W)\cap\Lambda$, where $\text{conv}(\lambda\cdot W)$ is the convex hull of the $W$-orbit of $\lambda$. In fact, the elements of $\lambda\cdot W$ are the vertices of $\text{conv}(\lambda\cdot W)$.  By construction, if $b\in\mathcal B_\lambda$ and $i\in I$ are such that $F_i(b)\neq 0$, then $\text{wt}(F_i(b))=\text{wt}(b)-\alpha_i$. This leads to the following simple yet important fact: if $w\in W$ and $i\in I$, then \begin{equation}\label{eqn:EiFi=0}
    \text{either}\quad E_i(v_\lambda\cdot w)=0\quad\text{or}\quad F_i(v_\lambda\cdot w)=0.
\end{equation} Indeed, suppose this is not the case, and let $b=E_i(v_\lambda\cdot w)$ and $b'=F_i(v_\lambda\cdot w)$
with both $b\neq 0$ and $b'\neq 0$. Then $F_i(b)=v_\lambda\cdot W\neq 0$ and $F_i(v_\lambda\cdot W)=b'\neq 0$, so we have \[\text{wt}(v_\lambda\cdot W)=\text{wt}(b)-\alpha_i\quad\text{and}\quad \text{wt}(b')=\text{wt}(v_\lambda\cdot W)-\alpha_i.\] This is impossible because $\text{wt}(b)$ and $\text{wt}(b')$ are in $\text{conv}(\lambda\cdot W)$ and $\lambda\cdot W$ is a vertex of $\text{conv}(\lambda\cdot W)$. 

In the following lemma, recall that for $b\in\mathcal B_\lambda$, we write $b_\downarrow$ for the set of colors of edges of the form $b'\lessdot b$. As mentioned before, we identify each $w\in\prescript{K}{}\!W$ with its image $v_\lambda\cdot w\in\mathcal B_\lambda$ under the embedding $\prescript{K}{}\!W\to\mathcal B_\lambda$. 

\begin{lemma}\label{lem:descentsaredowncolors}
For $w\in\prescript{K}{}\!W$, we have $D_{\mathsf R}(w)=\{s_i:i\in w_\downarrow\}$.
\end{lemma}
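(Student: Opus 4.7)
The plan is to translate both conditions in the claimed equality into statements about the maximal monochromatic $i$-chain through $b := v_\lambda\cdot w$. By definition, $s_i\in D_{\mathsf R}(w)$ means $ws_i<_{\mathsf R}w$, while $i\in w_\downarrow$ means exactly that $E_i(b)\neq 0$. Using the embedding $w\mapsto v_\lambda\cdot w$, which is a poset embedding of the right weak order on $\prescript{K}{}\!W$ into $\mathcal B_\lambda$, and which is $W$-equivariant by the construction of the right $W$-action, we may rewrite $v_\lambda\cdot(ws_i)=b\cdot s_i$, and so $s_i\in D_{\mathsf R}(w)$ is equivalent to $b\cdot s_i<b$ in $\mathcal B_\lambda$.

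Now I would invoke the key structural fact \eqref{eqn:EiFi=0}: because $b$ lies in the $W$-orbit of the highest-weight vector $v_\lambda$, we have $E_i(b)=0$ or $F_i(b)=0$. In other words, $b$ is always an endpoint of the maximal monochromatic saturated $i$-chain $x_1\lessdot\cdots\lessdot x_m$ that contains it: either $b=x_1$ (so $E_i(b)=0$) or $b=x_m$ (so $F_i(b)=0$). Since $s_i$ acts by reversing this $i$-chain, one has $b\cdot s_i=x_m$ in the first case and $b\cdot s_i=x_1$ in the second.

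These two cases make the equivalence immediate. If $b=x_1$, then $b\cdot s_i=x_m\geq b$, so $ws_i\geq_{\mathsf R}w$ and $s_i\notin D_{\mathsf R}(w)$; in this case $E_i(b)=0$, so $i\notin w_\downarrow$. If $b=x_m$ with $m\geq 2$, then $b\cdot s_i=x_1<b$, hence $ws_i<_{\mathsf R}w$ and $s_i\in D_{\mathsf R}(w)$; simultaneously $E_i(b)=x_{m-1}\neq 0$, so $i\in w_\downarrow$. The degenerate case $b=x_1=x_m$ (i.e.\ $m=1$) places $b$ at both endpoints, and here both conditions manifestly fail. Combining the cases yields $s_i\in D_{\mathsf R}(w)\iff i\in w_\downarrow$, which is the claim.

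The main subtlety is not a calculation but an invocation: one has to notice that \eqref{eqn:EiFi=0} forces elements of the form $v_\lambda\cdot w$ to sit at an endpoint of their monochromatic $i$-chain, so that the otherwise nontrivial action ``reverse an $i$-chain'' collapses to an honest cover relation going up or down. Once that observation is in hand, the rest is bookkeeping through the $W$-equivariance of the embedding.
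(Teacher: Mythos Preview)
Your proof is correct and follows essentially the same approach as the paper's: both arguments hinge on the observation from \eqref{eqn:EiFi=0} that $b=v_\lambda\cdot w$ sits at an endpoint of its maximal $i$-chain, then translate between $b\cdot s_i\lessgtr b$ in $\mathcal B_\lambda$ and $ws_i\lessgtr_{\mathsf R}w$ via the poset embedding. The only organizational difference is that you package the two containments into a single case analysis on which endpoint $b$ occupies, whereas the paper proves the inclusions separately.
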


\begin{proof}
Suppose $i\in w_\downarrow$. Then $E_i(w)\neq 0$. We saw in \eqref{eqn:EiFi=0} that $E_i(w)$ and $F_i(w)$ cannot both be nonzero, so $F_i(w)=0$. This shows that $w$ is at the top of some maximal monochromatic saturated chain of color $i$ that has at least two elements; the action of $s_i$ on $\mathcal B_\lambda$ reverses this $i$-chain, sending $w$ to the bottom element of the chain. Thus, $w\cdot s_i<w$ in $\mathcal B_\lambda$. This implies that $ws_i<_{\mathsf R}w$, so $s_i\in D_{\mathsf R}(w)$. 

To prove the reverse containment, suppose $s_i\in D_{\mathsf R}(w)$. Then $ws_i<_{\mathsf R} w$, so $ws_i<w$ in $\mathcal B_\lambda$. Referring again to the definition of the action of $s_i$ on $\mathcal B_\lambda$, we find that $w$ must not be the bottom element of the maximal monochromatic saturated chain of color $i$ that passes through $w$. That is, $i\in w_\downarrow$. 
\end{proof}

To complete this subsection, let us show that $\mathcal B_\lambda$ is actually a poppable crystal so that $\cpop\colon\mathcal B_\lambda\to\mathcal B_\lambda$ is well-defined. We will also show that the crystal pop-stack sorting operator $\cpop$ agrees with the Coxeter pop-stack sorting operator $\pop_W$ on $\prescript{K}{}\!W$. 

\begin{proposition}\label{lem:poppable}
Let $\mathfrak g$ be a semisimple Lie algebra with Weyl group $W$. Let $\lambda$ be a dominant weight for $\mathfrak g$, and let $K$ be the set of simple reflections of $W$ such that $W_K$ is the stabilizer of $\lambda$. The associated crystal $\mathcal B_\lambda$ is poppable. Furthermore, we have $\cpop(w)=\pop_W(w)$ for all $w\in \prescript{K}{}\!W$. 
\end{proposition}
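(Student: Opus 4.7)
\emph{Proof plan.} My plan is to handle the two assertions in turn. For poppability, I would invoke the standard branching principle in crystal theory: for any $J \subseteq I$, the diagram $\mathcal{B}_\lambda|_J$ is the crystal of $V^\lambda$ viewed as a module for the Levi subalgebra of $\mathfrak g$ generated by the Chevalley generators indexed by $J$, and it decomposes as a disjoint union of highest-weight crystals for that subalgebra (see~\cite{bump2017crystal}). Each highest-weight crystal has a unique source in its Hasse diagram, so every connected component of $\mathcal{B}_\lambda|_J$ has a unique source; hence $\mathcal{B}_\lambda$ is poppable and $\cpop$ is well defined. The main obstacle to writing a self-contained argument is citing (or re-deriving) this branching principle; everything that follows is bookkeeping with length-additive factorizations together with Lemma~\ref{lem:descentsaredowncolors}.

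For the equality $\cpop(w) = \pop_W(w)$ on $\prescript{K}{}\!W$, fix $w \in \prescript{K}{}\!W$ and set $J := w_\downarrow$ and $T := \{s_i : i \in J\} \subseteq S$. Lemma~\ref{lem:descentsaredowncolors} gives $D_{\mathsf R}(w) = T$, so by~\eqref{eqn:CoxPopDef1} we have $u := \pop_W(w) = w \cdot w_0(T)^{-1}$, and the factorization $w = u \cdot w_0(T)$ is length-additive. A standard consequence of the parabolic decomposition $W = W^T \cdot W_T$ is then that $u$ is the minimum-length representative of its right coset modulo $W_T$; in particular $D_{\mathsf R}(u) \cap T = \emptyset$. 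Since $\prescript{K}{}\!W$ is an order ideal of $W$ in right weak order and $u \leq_{\mathsf R} w$, we have $u \in \prescript{K}{}\!W$, so Lemma~\ref{lem:descentsaredowncolors} applied to $u$ yields $u_\downarrow \cap J = \emptyset$. Equivalently, $E_i(v_\lambda \cdot u) = 0$ for every $i \in J$, so $v_\lambda \cdot u$ is a source of $\mathcal{B}_\lambda|_J$.

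It remains to see that $v_\lambda \cdot u$ and $v_\lambda \cdot w$ lie in the same connected component of $\mathcal{B}_\lambda|_J$. Choosing a reduced expression $w_0(T) = s_{i_1} \cdots s_{i_r}$ with each $i_k \in J$, we have $v_\lambda \cdot w = (v_\lambda \cdot u) \cdot s_{i_1} \cdots s_{i_r}$; each successive action of $s_{i_k}$ reverses a monochromatic $i_k$-chain and therefore keeps us in the same connected component of $\mathcal{B}_\lambda|_J$. Thus $v_\lambda \cdot u$ is a source of the component of $\mathcal{B}_\lambda|_J$ containing $v_\lambda \cdot w$, and by the already-established poppability it is the unique such source. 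We conclude $\cpop(w) = v_\lambda \cdot u = \pop_W(w)$ under the embedding $\prescript{K}{}\!W \hookrightarrow \mathcal{B}_\lambda$.
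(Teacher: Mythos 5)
Your proof is correct, and while the poppability half is the same Levi-branching argument the paper gives, your proof of $\cpop(w)=\pop_W(w)$ takes a genuinely different route. The paper computes $\cpop(w)$ ``from above'': it walks down edges whose colors lie in $w_\downarrow$, using \eqref{eqn:EiFi=0} at every step to see that each maximal monochromatic descent inside the orbit of $v_\lambda$ is exactly right multiplication by a simple reflection, so that $\cpop(w)=w\cdot(s_{i_1}\cdots s_{i_r})$; it then compares the two length-additive factorizations $w=\cpop(w)s_{i_1}\cdots s_{i_r}$ and $w=\pop_W(w)w_0(D_{\mathsf R}(w))$ to force the discrepancy $z\in W_{D_{\mathsf R}(w)}$ to be trivial. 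You instead work ``from below'': starting from the candidate $u=\pop_W(w)=w\cdot w_0(T)^{-1}$, you use the length-additive factorization $w=u\,w_0(T)$ plus \Cref{lem:descentsaredowncolors} (and the fact that $\prescript{K}{}\!W$ is an order ideal, so $u\in\prescript{K}{}\!W$) to see that $u_\downarrow\cap w_\downarrow=\emptyset$, i.e.\ $v_\lambda\cdot u$ is a source of $\mathcal B_\lambda\vert_{w_\downarrow}$, and then you use the fact that the $s_i$-action reverses monochromatic $i$-chains to see that $v_\lambda\cdot u$ and $v_\lambda\cdot w$ lie in the same component; uniqueness of the source (poppability, already established) finishes the argument. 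Your version buys a cleaner endgame---once poppability is known, exhibiting \emph{some} source in the right component suffices, so no comparison of factorizations is needed and \eqref{eqn:EiFi=0} is only used through \Cref{lem:descentsaredowncolors}---but it leans on the fact that reversing $i$-chains really assembles into a group action of $W$ on $\mathcal B_\lambda$ (so that $(v_\lambda\cdot u)\cdot w_0(T)=v_\lambda\cdot w$), which the paper does record with a citation to Kashiwara, whereas the paper's walk-based computation has the side benefit of describing explicitly how $\cpop(w)$ is reached from $w$ inside the embedded parabolic quotient.
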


\begin{proof}
Suppose $J\subseteq I$. Let $\mathfrak g_J$ be the semisimple Lie algebra whose Dynkin diagram is obtained from that of $\mathfrak g$ by taking the induced subgraph on the vertex set (corresponding to) $J$. Recall that $(\mathcal B_\lambda)\vert_J$ is the crystal obtained from $\mathcal B_\lambda$ by removing the edges whose colors are not in $J$. This process of removing edges is called \emph{Levi branching}, and it is known that each connected component of $(\mathcal B_\lambda)\vert_J$ is a crystal associated to an irreducible representation of $\mathfrak g_J$ (see \cite{Kashiwara95oncrystal}). As mentioned above, a crystal associated to an irreducible representation of a semisimple Lie algebra has a unique minimal element (the highest-weight vector). Therefore, every connected component of $(\mathcal B_\lambda)\vert_J$ has a unique minimal element (or, in graph-theoretic language, a unique source). Thus, $\mathcal B_\lambda$ is poppable. 

Now fix $w\in\prescript{K}{}\!W$. We know by \Cref{lem:descentsaredowncolors} that $s_i$ is a right descent of $w$ if and only if $i\in w_\downarrow$. We can compute $\cpop(w)$ as follows. First, choose some $i_1\in w_\downarrow$, and, starting from $w$, traverse down edges of color $i_1$ until it is no longer possible to do so; let $u_1$ be the resulting element of $\mathcal B_\lambda$. Since $E_{i_1}(w)\neq 0$, we must have $F_{i_1}(w)=0$ by \eqref{eqn:EiFi=0}. It follows that $u_1=w\cdot s_{i_1}$. Now find some $i_2\in(u_1)_{\downarrow}\cap w_\downarrow$, and traverse down edges of color $i_2$ until it is no longer possible to do so; let $u_2$ be the resulting element of $\mathcal B_\lambda$. Since $u_1\in\prescript{K}{}\!W$ and $E_{i_2}(u_1)\neq 0$, we must have $F_{i_2}(u_1)=0$ by \eqref{eqn:EiFi=0}. It follows that $u_2=u_1\cdot s_{i_2}=w\cdot(s_{i_1}s_{i_2})$. Continue in this fashion, at each step choosing $i_j\in (u_{j-1})_\downarrow\cap w_\downarrow$ until eventually reaching some $u_r$ with $(u_r)_\downarrow\cap w_\downarrow=\emptyset$. This element $u_r$ is $\cpop(w)$. Hence, $\cpop(w)=w\cdot(s_{i_1}s_{i_2}\cdots s_{i_r})$. Since $(u_r)_\downarrow\cap w_\downarrow=\emptyset$, \Cref{lem:descentsaredowncolors} tells us that $\cpop(w)$ and $w$ have no right descents in common. By construction, each $s_{i_j}$ is a right descent of $u_{j-1}$, so we have the length-additive factorization $w=\cpop(w)s_{i_1}s_{i_2}\cdots s_{i_r}$. On the other hand, the factorization $w=\pop_W(w)w_0(D_{\mathsf R}(w))$ given by \eqref{eqn:CoxPopDef1} is also length-additive (this is equivalent to the basic fact that $w_0(D_{\mathsf R}(w))\leq_{\mathsf L} w$). Each $i_j$ was chosen from $w_\downarrow$, so each $s_{i_j}$ is in $D_{\mathsf R}(w)$. It follows that there is a length-additive factorization $w_0(D_{\mathsf R}(w))=zs_{i_1}s_{i_2}\cdots s_{i_r}$ for some $z\in W_{D_{\mathsf R}(w)}$. Thus, we have $w=\cpop(w)s_{i_1}s_{i_2}\cdots s_{i_r}=\pop_W(w)zs_{i_1}s_{i_2}\cdots s_{i_r}$, where both factorizations are length-additive. This implies that $\pop_W(w)z$ is a length-additive factorization of $\cpop(w)$. Hence, every right descent of $z$ is also a right descent of $\cpop(w)$. But $z\in W_{D_{\mathsf R}(w)}$, so the right descents of $z$ are also right descents of $w$. We saw above that $\cpop(w)$ and $w$ have no right descents in common, so we must have $z=e$. Thus, $\cpop(w)=\pop_W(w)$. 
\end{proof}

\subsection{Partitions and Tableaux}
A \defn{partition} is a nonincreasing tuple of positive integers. We sometimes append $0$'s to the end of a partition, but doing so does not result in a different partition. We draw Young diagrams of partitions in English notation, writing $(i,j)$ for the cell in the $i$-th row (counted from top to bottom) and $j$-th column (counted from left to right). 

A \defn{semistandard Young tableau} (henceforth simply called a \defn{tableau}) of shape $\lambda$ is a filling of the cells of the Young diagram of $\lambda$ with positive integers such that rows are weakly increasing (from left to right) and columns are strictly increasing (from top to bottom). We write $T(i,j)$ for the entry in cell $(i,j)$ in the tableau $T$. The \defn{reading word} of a tableau is the word obtained by reading its entries row by row from bottom to top. For example, the reading word of 
\,\raisebox{0.5\height}{\begin{ytableau}
1 & 1 & 2 & 4 \\
2 & 4 \\
4
\end{ytableau}}\, is $4241124$. We often use superscripts to denote repeated entries in a row of a tableau. For example, we could describe \,\raisebox{0.5\height}{\begin{ytableau}
1 & 1 & 1 & 1 & 4 \\
2 & 4
\end{ytableau}}\, as the tableau of shape $(5,2)$ with rows $(1^4,4)$ and $(2,4)$. 

\subsection{Type A Crystals}
\label{sec:type_a}

In this subsection, we narrow our focus to the semisimple Lie algebra $\mathfrak{sl}_{n+1}$. The dominant weights are partitions of the form $\lambda=(\lambda_1,\ldots,\lambda_n,0)$, which we view as vectors in $\mathbb R^{n+1}$. Note that we allow such a partition $\lambda$ to have strictly fewer than $n$ nonzero parts (so not all of the parts $\lambda_1,\ldots,\lambda_n$ need to be positive). To stress the dependence on $n$, we write $\mathcal B_\lambda^n$ for the crystal of $\mathfrak{sl}_{n+1}$ associated to the dominant weight $\lambda$. The elements of $\mathcal B_\lambda^n$ are indexed by tableaux of shape $\lambda$ with maximum entry at most $n+1$; we will often tacitly identify $\mathcal B_\lambda^n$ with this set of tableaux. In particular, the minimal element of $\mathcal B_\lambda^n$, which is the tableaux corresponding to the highest-weight vector $v_\lambda$, is the tableau $T_{\min}$ of shape $\lambda$ in which $T_{\min}(i,j)=i$ for all cells $(i,j)$.

In \Cref{subsec:crystals}, we defined the lowering operators $F_i$ and the raising operators $E_i$ in general type; let us give an explicit combinatorial description of these operators for type~A crystals. Fix $i\in[n]$ and $T\in\mathcal B_\lambda^n$. We will actually just describe the procedure for applying $F_i$ since this description can easily be reversed in order to apply $E_i$ (and since we will really only use the lowering operators in our proofs). Consider the word $z_i$ obtained by deleting all letters from the reading word of $T$ that are not $i$ or $i+1$. Let $z_i'$ be the word obtained from $z_i$ by replacing each $i$ with a closing parenthesis $)$ and replacing each $i+1$ with an open parenthesis $($. Some of the open parentheses can match with closing parentheses in the obvious manner. If there are no unmatched closing parentheses in $z_i'$, then $F_i(T)=0$. Now suppose there is some unmatched closing parenthesis in $z_i'$. The rightmost such closing parenthesis corresponds to an occurrence of the letter $i$ in $z_i$, which, in turn, corresponds to an entry $i$ in the tableau $T$. Let $F_i(T)$ be the tableau obtained by changing that entry $i$ into $i+1$.

\begin{example}
Suppose $n=2$, $\lambda=(5,2)$, and \[T=\raisebox{0.5\height}{\begin{ytableau}
1 & 1 & 2 & 2 & 3 \\
3 & 3
\end{ytableau}}.\] Let us compute $F_1(T)$. By deleting all letters other than $1$ and $2$ from the reading word of $T$, we obtain the word $z_1=1122$. Then $z_1'=))(($, and the rightmost unmatched closing parenthesis in $z_1'$ is the second closing parenthesis, which corresponds to the second $1$ in $z_1'$. Thus, \[F_1(T)=\raisebox{0.5\height}{\begin{ytableau}
1 & 2 & 2 & 2 & 3 \\
3 & 3
\end{ytableau}}.\] On the other hand, when we apply $F_2$ to $T$, we find that $z_2=33223$ and $z_2'=(())($. Since $z_2'$ has no unmatched closing parentheses, $F_2(T)=0$. \hfill$\lozenge$
\end{example}

The cover relations of the partial order on $\mathcal B_\lambda^n$ are the relations of the form $T\lessdot F_i(T)$ for $i\in [n]$ and $F_i(T)\neq 0$. Each such edge is colored with the color $i$. When drawing the Hasse diagram of $\mathcal B_\lambda^n$, we will represent an edge colored $i$ with an upward-pointing arrow marked with the label $F_i$.

\section{Maximum Orbit Sizes for Crystal Pop-Stack Sorting}\label{sec:pop}

The purpose of this section is to prove \Cref{thm:popmain}. Let us fix a semisimple Lie algebra $\mathfrak g$ with root system $\Phi$, weight lattice $\Lambda$, and Weyl group $W$. We also fix a choice of positive roots $\Phi^+$ and an indexing set $I$ such that the simple roots are $\alpha_i$ for $i\in I$. Let $s_i\in W$ be the reflection through the hyperplane orthogonal to $\alpha_i$, and let $S=\{s_i:i\in I\}$. Let $h$ be the Coxeter number of $W$. We fix a nonzero dominant weight $\lambda\in\Lambda^+\setminus\{0\}$ and consider the corresponding crystal $\mathcal B_\lambda$. Let $K\subseteq S$ be the set of simple reflections such that $W_K$ is the stabilizer of $\lambda$ under the natural action of $W$. As discussed in \Cref{subsec:crystals}, the right weak order on the parabolic quotient $\prescript{K}{}\!W$ embeds into $\mathcal B_\lambda$; we identify $\prescript{K}{}\!W$ with its image under this embedding. 

Our main idea is to extract information about the crystal pop-stack sorting operator $\cpop\colon\mathcal B_\lambda\to\mathcal B_\lambda$ from the information about $\pop_W$ that the first author collected in \cite{defant2021stack}. The crucial tool for doing this is the \defn{key map} $\kappa\colon\mathcal B_\lambda\to \prescript{K}{}\!W$~\cite{littelmann1994littlewood}. This map is discussed in \cite{hersh2017weak}; rather than define it formally, let us simply record its properties that we will need in the following proposition. (Note that \cite{hersh2017weak} uses left actions and the left weak order, so we have translated to right actions and the right weak order by taking inverses.) 

\begin{proposition}[\cite{hersh2017weak}]\label{lem:key}
There exists a map $\kappa\colon\mathcal B_\lambda\to \prescript{K}{}\!W$ satisfying the following for all $i\in I$ and all $b\in \mathcal B_\lambda$:
\begin{itemize}
    \item If $E_i(b)\neq 0$ and $F_i(b)\neq 0$, then $\kappa(F_i(b))=\kappa(b)$.
    \item If $E_i(b)=0\neq F_i(b)$, then $\kappa(F_i(b))$ is either $\kappa(b)s_i$ or $\kappa(b)$.
    \item If $s_i$ is a right descent of $\kappa(b)$, then $E_i(b)\neq 0$.
    \item If $\kappa(b)=e$, then $b=v_\lambda$.
\end{itemize}
These properties imply that $\kappa$ is order-preserving, meaning that if $b\leq b'$, then $\kappa(b)\leq_{\mathsf R}\kappa(b')$.
\end{proposition}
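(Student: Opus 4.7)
The plan is to treat the existence of $\kappa$ with the first four properties as established by the cited references (Littelmann's right key via the path model, or equivalently the Lascoux--Schützenberger right key), and to focus the argument on deducing the order-preserving conclusion from those four properties. Since every cover relation in $\mathcal B_\lambda$ has the form $b\lessdot F_i(b)$ with $F_i(b)\neq 0$, it suffices by induction along a saturated chain between two comparable elements to prove that $\kappa(b)\leq_{\mathsf R}\kappa(F_i(b))$ whenever $F_i(b)\neq 0$.

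For a single cover $b\lessdot F_i(b)$, I would split into the two subcases isolated by the first two bullets. If $E_i(b)\neq 0$, then the first bullet gives $\kappa(F_i(b))=\kappa(b)$ and there is nothing to check. If instead $E_i(b)=0$, the second bullet gives $\kappa(F_i(b))\in\{\kappa(b),\kappa(b)s_i\}$; in the first possibility the inequality is trivial, while in the second I must verify that $\kappa(b)\lessdot_{\mathsf R}\kappa(b)s_i$, i.e., that $s_i\notin D_{\mathsf R}(\kappa(b))$.

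This last point is exactly where the third bullet enters: by contrapositive, if $E_i(b)=0$ then $s_i$ cannot be a right descent of $\kappa(b)$, so right-multiplication by $s_i$ is length-increasing and $\kappa(b)\lessdot_{\mathsf R}\kappa(b)s_i$ is genuinely a cover in the right weak order. Combining the two subcases shows that every cover in $\mathcal B_\lambda$ maps under $\kappa$ either to an equality or to a cover in $\leq_{\mathsf R}$ on $\prescript{K}{}\!W$, which, together with the induction along saturated chains, is exactly the order-preserving property. Note that the fourth bullet is not needed for this deduction; it will be invoked later when identifying the unique preimage of the identity.

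The main obstacle is not the derivation above, which is a short case analysis, but rather the bookkeeping around the existence statement and its conventions. In particular, \cite{hersh2017weak} and \cite{littelmann1994littlewood} phrase everything in terms of left actions and the left weak order, so one must carefully transport their versions of the first three properties into the right-action form used here by taking inverses, and confirm that the resulting map still lands in the parabolic quotient $\prescript{K}{}\!W$ rather than in a coset thereof. Once those conventions are aligned, nothing beyond the sketch above is needed to complete the proposition.
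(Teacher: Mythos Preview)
Your proposal is correct and matches the paper's treatment: the paper does not give a proof but simply cites \cite{hersh2017weak} for the existence of $\kappa$ with the four bulleted properties, and asserts without further argument that these properties imply $\kappa$ is order-preserving. Your case analysis along a single cover $b\lessdot F_i(b)$, using the first two bullets to reduce to showing $s_i\notin D_{\mathsf R}(\kappa(b))$ when $E_i(b)=0$ and then invoking the contrapositive of the third bullet, is exactly the intended verification of that assertion.
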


We will frequently use the key map to project to the parabolic quotient $\prescript{K}{}\!W$ and then apply the Coxeter pop-stack sorting operator $\pop_W$. Since $\prescript{K}{}\!W$ is an order ideal of the right weak order on $W$ and $\pop_W(w)\leq_{\mathsf R} w$ for all $w\in W$, the map $\pop_W$ does actually restrict to a well-defined map from $\prescript{K}{}\!W$ to itself. We will need the following results from \cite{defant2021stack}. 

\begin{lemma}[\cite{defant2021stack}]\label{lem:Lem2.1}
Suppose $J\subseteq S$. If $y,z\in W$ are such that $y\leq_{\mathsf R} z$, then $\prescript{J}{}\!y\leq_{\mathsf R} \prescript{J}{}\!z$. 
\end{lemma}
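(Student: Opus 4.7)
The plan is to proceed by induction on $\ell(z) - \ell(y)$. The base case $y = z$ is trivial, and the inductive step immediately reduces to the case where $z$ covers $y$ in right weak order, i.e., $z = ys$ with $\ell(z) = \ell(y) + 1$ for some simple reflection $s \in S$. So from here on, I assume $z = ys$ is a cover.

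Given this reduction, I would write the length-additive factorization $y = a \cdot u$ with $a \in W_J$ and $u := \prescript{J}{}\!y$, so that $z = aus$. A length comparison using $\ell(z) = \ell(a) + \ell(u) + 1$ against the subadditivity bound $\ell(z) \leq \ell(a) + \ell(us)$ forces $\ell(us) = \ell(u) + 1$, whence the factorization $z = a \cdot (us)$ is also length-additive. Consequently $W_J z = W_J \cdot us$, so $\prescript{J}{}\!z = \prescript{J}{}\!(us)$, and it suffices to identify the minimum-length representative of the coset $W_J \cdot us$.

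Now I would split into two cases. If $us \in \prescript{J}{}\!W$, then $\prescript{J}{}\!z = us$, which covers $\prescript{J}{}\!y = u$ in right weak order, and we are done. Otherwise, when $us \notin \prescript{J}{}\!W$, I claim that $\prescript{J}{}\!z = u = \prescript{J}{}\!y$, yielding equality.

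The main obstacle is proving the claim in the second case, for which the natural tool is the root-theoretic characterization of left descents. Pick some $t \in J$ that is a left descent of $us$; this is possible because $us \notin \prescript{J}{}\!W$. Writing $\alpha_s$ and $\alpha_t$ for the simple roots corresponding to $s$ and $t$, the descent condition becomes $(us)^{-1}(\alpha_t) = s(u^{-1}(\alpha_t)) \in -\Phi^+$, while the hypothesis $u \in \prescript{J}{}\!W$ gives $u^{-1}(\alpha_t) \in \Phi^+$. Setting $\beta = u^{-1}(\alpha_t) \in \Phi^+$, the condition $s(\beta) \in -\Phi^+$ forces $\beta = \alpha_s$, because a simple reflection sends exactly one positive root (its own simple root) to a negative root. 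Hence $u(\alpha_s) = \alpha_t$, which translates to the conjugation identity $usu^{-1} = t \in W_J$. Therefore $u = t \cdot (us) \in W_J \cdot us$, and since $u \in \prescript{J}{}\!W$ is the unique minimum-length element of its coset, $\prescript{J}{}\!(us) = u$, as desired.
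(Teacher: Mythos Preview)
Your proof is correct. The paper does not supply its own argument for this lemma; it simply cites \cite{defant2021stack}, so there is no in-paper proof to compare against. Your approach---reducing to a single cover $z = ys$, using the length-additive factorization $y = a u$ with $u = \prescript{J}{}\!y$, and then splitting on whether $us \in \prescript{J}{}\!W$---is the standard one. The root-theoretic step identifying $\beta = \alpha_s$ (via the fact that a simple reflection negates exactly its own simple root among the positive roots) is valid in the geometric representation of any Coxeter group, so the argument applies at the stated level of generality.
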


\begin{lemma}[\cite{defant2021stack}]\label{lem:Lem3.5}
Let $x,y\in W$, and suppose all of the right descents of $y$ commute with each other. If $x\leq_{\mathsf B} y$, then $\pop_W(x)\leq_{\mathsf B}\pop_W(y)$.
\end{lemma}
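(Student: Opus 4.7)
My plan is to apply the lifting property of Bruhat order iteratively along the commuting reflections in $D_{\mathsf R}(y)$, and then argue that the element so produced dominates $\pop_W(x)$ in the right weak order. Enumerate $D_{\mathsf R}(y) = \{s_1,\ldots,s_k\}$. Because the $s_i$ pairwise commute, $w_0(D_{\mathsf R}(y)) = s_1 s_2 \cdots s_k$, and multiplying $y$ on the right by each prefix $s_1 \cdots s_i$ is length-subtractive one step at a time (use commutativity to put any given $s_i$ on the right of the accumulated product), so $s_i \in D_{\mathsf R}(y s_1 \cdots s_{i-1})$ for every $i$ and $\pop_W(y) = y s_1 \cdots s_k$.

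I build a sequence $x = x^{(0)}, x^{(1)}, \ldots, x^{(k)}$ by setting $x^{(i)} = x^{(i-1)} s_i$ when $s_i \in D_{\mathsf R}(x^{(i-1)})$ and $x^{(i)} = x^{(i-1)}$ otherwise, and prove by induction on $i$ that $x^{(i)} \leq_{\mathsf B} y s_1 \cdots s_i$. The inductive step is the standard lifting property: if $u \leq_{\mathsf B} v$ and $s \in D_{\mathsf R}(v)$, then $u \leq_{\mathsf B} vs$ (when $s \notin D_{\mathsf R}(u)$) and $us \leq_{\mathsf B} vs$ (when $s \in D_{\mathsf R}(u)$). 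After $k$ steps, $x^{(k)} \leq_{\mathsf B} \pop_W(y)$, and by construction $x^{(k)} = x \cdot w_0(J')$ with $\ell(x^{(k)}) = \ell(x) - |J'|$, where $J' := \{s_i : s_i \in D_{\mathsf R}(x^{(i-1)})\} \subseteq D_{\mathsf R}(y)$.

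The next step is to verify that $J' \subseteq D_{\mathsf R}(x)$. If $s_i \in J'$, write $x^{(i-1)} = xP$ where $P$ is the product of the reflections in $J'$ selected before step $i$; since every reflection in $J'$ lies in $D_{\mathsf R}(y)$, each commutes with $s_i$, giving $x^{(i-1)} s_i = xs_i P$. Combining $\ell(xs_i P) = \ell(x^{(i-1)} s_i) < \ell(x^{(i-1)}) = \ell(x) - |P|$ with the triangle inequality $\ell(xs_i) \leq \ell(xs_i P) + \ell(P)$ yields $\ell(xs_i) < \ell(x)$, so $s_i \in D_{\mathsf R}(x)$. Hence $J' \subseteq D_{\mathsf R}(x)$, so $w_0(J') \leq_{\mathsf R} w_0(D_{\mathsf R}(x))$, and there is a length-additive factorization $w_0(D_{\mathsf R}(x)) = u \cdot w_0(J')$. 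Plugging into the length-additive identities $x = \pop_W(x) \cdot w_0(D_{\mathsf R}(x))$ and $x = x^{(k)} \cdot w_0(J')$, and cancelling $w_0(J')$ on the right, gives $x^{(k)} = \pop_W(x) \cdot u$ length-additively. Therefore $\pop_W(x) \leq_{\mathsf R} x^{(k)}$, and since the Bruhat order extends the right weak order, $\pop_W(x) \leq_{\mathsf B} x^{(k)} \leq_{\mathsf B} \pop_W(y)$.

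The main obstacle is the bookkeeping that ties back to the commutativity hypothesis: verifying that $s_i$ remains a right descent of $y s_1 \cdots s_{i-1}$ (so that the lifting property applies at each stage) and that the selected reflections in $J'$ truly lie in $D_{\mathsf R}(x)$. Both steps rely essentially on the assumption that $D_{\mathsf R}(y)$ consists of pairwise commuting reflections; without it, $s_i$ would not commute past the previously-absorbed reflections in $P$, the set $J'$ could fail to sit inside $D_{\mathsf R}(x)$, and the clean length cancellation at the end would break.
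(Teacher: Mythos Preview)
Your argument is correct. The paper does not actually prove this lemma; it is quoted verbatim from \cite{defant2021stack}, so there is no ``paper's own proof'' to compare against here. Your iterative use of the lifting property along the commuting descents of $y$, followed by the verification that the selected reflections $J'$ lie in $D_{\mathsf R}(x)$ and the length-additive cancellation giving $\pop_W(x)\leq_{\mathsf R} x^{(k)}$, is a clean and standard route to the result.

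One small notational slip: when you write ``$w_0(J') \leq_{\mathsf R} w_0(D_{\mathsf R}(x))$, and there is a length-additive factorization $w_0(D_{\mathsf R}(x)) = u \cdot w_0(J')$,'' the factorization on the right actually corresponds to $w_0(J') \leq_{\mathsf L} w_0(D_{\mathsf R}(x))$, not $\leq_{\mathsf R}$. This does not affect correctness, since $w_0(D_{\mathsf R}(x))$ is the longest element of $W_{D_{\mathsf R}(x)}$ and hence dominates $w_0(J')$ in both weak orders; but it would be cleaner to write $\leq_{\mathsf L}$ there.
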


\begin{lemma}[\cite{defant2021stack}]\label{lem:3.6}
If $J\subseteq S$, then $\prescript{J}{}\!(\pop_W(w))\leq_{\mathsf R}\pop_W(\prescript{J}{}\!w)$ for all $w\in W$.
\end{lemma}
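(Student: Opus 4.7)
The plan is to reduce everything to the meet characterization \eqref{eqn:CoxPopDef2} of $\pop_W$ and then push the minimum-length-representative map $u\mapsto\prescript{J}{}\!u$ through the meet using the monotonicity supplied by \Cref{lem:Lem2.1}. To organize the bookkeeping, I will set $C(u)=\{us:s\in D_{\mathsf R}(u)\}\cup\{u\}$, so that \eqref{eqn:CoxPopDef2} reads $\pop_W(u)=\bigwedge C(u)$.

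The first step is a general observation: for any finite $X\subseteq W$, one has $\prescript{J}{}\!\bigl(\bigwedge X\bigr)\leq_{\mathsf R}\bigwedge_{x\in X}\prescript{J}{}\!x$, because $\bigwedge X\leq_{\mathsf R} x$ for every $x\in X$, and \Cref{lem:Lem2.1} turns each such inequality into $\prescript{J}{}\!\bigl(\bigwedge X\bigr)\leq_{\mathsf R}\prescript{J}{}\!x$. Applying this with $X=C(w)$ yields
\[\prescript{J}{}\!(\pop_W(w))\;\leq_{\mathsf R}\;\bigwedge_{x\in C(w)}\prescript{J}{}\!x.\]
To upgrade the right-hand side to $\pop_W(\prescript{J}{}\!w)=\bigwedge C(\prescript{J}{}\!w)$ it then suffices to establish the set-theoretic containment $C(\prescript{J}{}\!w)\subseteq\{\prescript{J}{}\!x:x\in C(w)\}$, since the meet over a smaller set is larger in the right weak order.

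To prove this containment, I would write $w=w_J\cdot\prescript{J}{}\!w$ length-additively with $w_J\in W_J$ and fix $s\in D_{\mathsf R}(\prescript{J}{}\!w)$. Two facts need to be checked: (i) $s\in D_{\mathsf R}(w)$, so that $ws\in C(w)$, and (ii) $\prescript{J}{}\!(ws)=\prescript{J}{}\!w\cdot s$, so that $\prescript{J}{}\!w\cdot s$ actually appears on the right-hand side of the display. Fact (i) is immediate because $w_J\cdot(\prescript{J}{}\!w\cdot s)$ is a length-additive expression of length $\ell(w)-1$ for $ws$.

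I expect fact (ii) to be the main obstacle: it reduces to showing that $\prescript{J}{}\!w\cdot s$ remains in the parabolic quotient $\prescript{J}{}\!W$, i.e., that $D_{\mathsf L}(\prescript{J}{}\!w\cdot s)\cap J=\emptyset$. I would argue by contradiction: if some $t\in J$ were a left descent of $\prescript{J}{}\!w\cdot s$, then $\ell(t\,\prescript{J}{}\!w\,s)=\ell(\prescript{J}{}\!w)-2$. However, $\prescript{J}{}\!w\in\prescript{J}{}\!W$ forces $t\notin D_{\mathsf L}(\prescript{J}{}\!w)$, so $\ell(t\,\prescript{J}{}\!w)=\ell(\prescript{J}{}\!w)+1$, and since right multiplication by a single simple reflection changes length by $\pm1$, this gives $\ell(t\,\prescript{J}{}\!w\,s)\geq\ell(\prescript{J}{}\!w)$, contradicting the previous equation. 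Combining (i) and (ii) with the displayed inequality above delivers $\prescript{J}{}\!(\pop_W(w))\leq_{\mathsf R}\pop_W(\prescript{J}{}\!w)$, as desired.
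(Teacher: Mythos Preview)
The paper does not supply a proof of this lemma; it is quoted without proof from \cite{defant2021stack}. Your argument is correct: the monotonicity from \Cref{lem:Lem2.1} pushes $\prescript{J}{}\!(\cdot)$ inside the meet, and the containment $C(\prescript{J}{}\!w)\subseteq\{\prescript{J}{}\!x:x\in C(w)\}$ follows exactly as you describe. In fact, your step (ii)---that $\prescript{J}{}\!w\cdot s\in\prescript{J}{}\!W$ whenever $s\in D_{\mathsf R}(\prescript{J}{}\!w)$---is precisely the assertion (recorded in the paper's background section, citing Stembridge) that $\prescript{J}{}\!W$ is an order ideal in right weak order, so you could have invoked that directly rather than reproving it via the length contradiction.
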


The following proposition was proven in \cite{defant2021stack} for the case when the Coxeter number $h$ is even. The only Weyl groups with odd Coxeter numbers are the symmetric groups $\mathfrak S_{n}$ when $n$ is odd. In these cases, the proposition follows from the work of Ungar \cite{Ungar} (and a short additional argument). 

\begin{lemma}[\cite{defant2021stack, Ungar}]\label{lem:defantw0}
Choose $s\in S$, and let $J=S\setminus\{s\}$. We have $\pop_W^{h-1}(\prescript{J}{}\!w_0)=e$ and $\pop_W^{h-2}(\prescript{J}{}\!w_0)\neq e$. Furthermore, for all $t\geq 0$, the right descents of $\pop_W^t(\prescript{J}{}\!w_0)$ all commute with each other.
\end{lemma}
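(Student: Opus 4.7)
The plan is to leverage the rigid structure of $\prescript{J}{}\!w_0$ and its iterates under $\pop_W$, tying them to the action of a bipartite Coxeter element of $W$. I would first unpack $\prescript{J}{}\!w_0$: because $w_0 = w_0(J) \cdot \prescript{J}{}\!w_0$ is length-additive, we have $\prescript{J}{}\!w_0 = w_0(J)^{-1} w_0$, and I would compute $D_{\mathsf R}(\prescript{J}{}\!w_0)$ using the Dynkin diagram automorphism $\sigma$ induced by $-w_0$. Working out small examples (e.g., in $\mathfrak S_4$ for each of the three choices of $s$) suggests that this descent set is a singleton, so the commuting-descents claim holds trivially at $t=0$.

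Next I would prove by induction on $t$ that $D_{\mathsf R}(\pop_W^t(\prescript{J}{}\!w_0))$ consists of pairwise commuting reflections. The inductive step uses the defining formula $\pop_W(w) = w \cdot w_0(D_{\mathsf R}(w))^{-1}$: when the right descents of $w$ all commute, $w_0(D_{\mathsf R}(w))$ is simply the product of those reflections, and one can directly analyze how right-multiplication by this product changes the descent set. Fixing a bipartition $S = S_+ \sqcup S_-$ of the Dynkin diagram into pairwise-commuting pieces, the structural content of the induction is that each iterate has its right descents contained in alternating pieces $S_+$ and $S_-$, which can be checked by a root-system computation.

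Finally, for the exact count $\pop_W^{h-1}(\prescript{J}{}\!w_0) = e$ (and no earlier), I would split into cases. For $W = \mathfrak S_n$, this reduces to Ungar's theorem, with a small additional argument when $n$ is odd (matching the way the lemma statement cites Ungar separately). For the other irreducible Weyl groups (where $h$ is even), I would track the iterates of $\pop_W$ via multiplication by a bipartite Coxeter element $c = c_+ c_-$: since $c$ has order $h$ and its orbits on the positive roots all have size $h$, the length $\ell(\prescript{J}{}\!w_0) = \ell(w_0) - \ell(w_0(J))$ is consumed in exactly $h - 1$ steps, reaching the identity only at the last step. The main obstacle I foresee is guaranteeing that the descent set at each iterate is exactly the expected piece of the bipartition rather than a proper subset, since a proper subset could cause early termination or break the length count. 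Ruling this out likely requires invoking the fact that $\prescript{J}{}\!w_0$ is the longest element of its parabolic quotient, so its iterates inherit a maximality property that forces the descent sets to be as large as the alternating structure permits.
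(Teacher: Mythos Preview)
Your proposal diverges sharply from the paper's actual proof, which is much shorter because it offloads most of the work. The paper does not attempt an inductive analysis via a bipartite Coxeter element at all. For even $h$ it simply cites \cite{defant2021stack} for the full statement, and for odd $h$ (where $W=\mathfrak S_n$ with $n$ odd) it cites Ungar for the orbit-size claims. The only thing the paper proves directly is the commuting-descents statement in type~$A$, and there the argument is not by induction on $t$: it instead observes that $\prescript{J}{}\!W$ is an order ideal of the right weak order, so $v=\pop_W^t(\prescript{J}{}\!w_0)$ lies in $\prescript{J}{}\!W$ for every $t$. Writing $s=s_i$, membership in $\prescript{J}{}\!W$ means $v^{-1}(1)<\cdots<v^{-1}(i)$ and $v^{-1}(i+1)<\cdots<v^{-1}(n)$, i.e.\ the one-line notation of $v$ is a shuffle of two increasing sequences, so no three consecutive entries can be decreasing. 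That single observation yields commuting right descents for all $t$ simultaneously.

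Your route is essentially a sketch of the argument in \cite{defant2021stack} rather than of the present paper. The base case (that $D_{\mathsf R}(\prescript{J}{}\!w_0)$ is a singleton) is correct: conjugation by $w_0$ shows it equals $\{w_0 s w_0\}$. But the inductive step you outline has a genuine gap: from ``all right descents of $w$ lie in $S_+$'' it does \emph{not} follow formally that all right descents of $\pop_W(w)=w\cdot\prod_{s\in D_{\mathsf R}(w)}s$ lie in $S_-$; this requires the extra maximality information carried by the specific element $\prescript{J}{}\!w_0$, which is exactly the obstacle you flag at the end. Resolving it is the substantive content of \cite{defant2021stack}, so your plan would amount to reproving that paper's main technical lemma rather than the short reduction the present paper gives.
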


\begin{proof}
As mentioned above, this result was proven in \cite{defant2021stack} when $h$ is even, so we may assume $h$ is odd. This means that $W=\mathfrak S_n$, where $h=n$ is odd. Also, $\pop_W$ is just the pop-stack sorting map $\pop$, which acts on permutations by reversing descending runs in the manner discussed in the introduction. Ungar \cite{Ungar} proved that $\pop^{n-1}(w)=e$ for all $w\in\mathfrak S_n$ (so, in particular, when $w=\prescript{J}{}\!w_0$). He also proved that $\pop_W^{n-2}(\prescript{J}{}\!w_0)\neq e$ (in fact, he proved that this holds whenever $J$ is \emph{any} nonempty proper subset of $S$, not just one of the form $S\setminus\{s\}$). Thus, we are left to show that if $t\geq 0$, then all of the right descents of $\pop^t(\prescript{J}{}\!w_0)$ commute with each other. Fix $t\geq 0$, and let $v=\pop^t(\prescript{J}{}\!w_0)$. Let $i\in[n-1]$ be such that $s$ is the simple transposition $(i\,\, i+1)$. Since $\prescript{J}{}\!W$ is an order ideal of the right weak order on $W$, we know that $v\in\prescript{J}{}\!W$. This means that $v$ has no left descents in $J$, so 
\begin{equation}\label{eqn:vstructure}
v^{-1}(1)<v^{-1}(2)<\cdots<v^{-1}(i)\quad\text{and}\quad v^{-1}(i+1)<v^{-1}(i+2)<\cdots<v^{-1}(n).
\end{equation} 
Saying the right descents of $v$ commute with each other is equivalent to saying there does not exist $m\in[n-2]$ such that $v(m)>v(m+1)>v(m+2)$. That no such $m$ exists is immediate from \eqref{eqn:vstructure}. 
\end{proof}

We will also need the following simple lemma. 
\begin{lemma}\label{lem:technical}
Suppose $w\in\prescript{K}{}\!W$ is such that $\prescript{S\setminus\{s\}}{}\!w=e$ for all $s\in S\setminus K$. Then $w=e$.
\end{lemma}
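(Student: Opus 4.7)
My plan is to translate the hypothesis into a membership condition on $w$ inside $W$ and then use the left-descent characterization of $\prescript{K}{}\!W$ to force $w = e$.

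First, I would observe that for any $J \subseteq S$, the equality $\prescript{J}{}\!w = e$ is equivalent to $w \in W_J$: indeed, $\prescript{J}{}\!w$ is by definition the minimum-length element of the coset $W_J w$, and this equals $e$ precisely when $e \in W_J w$, i.e., when $w \in W_J$. Applying this to each $s \in S \setminus K$ yields
\[
w \in \bigcap_{s \in S\setminus K} W_{S\setminus\{s\}},
\]
and the standard identity $W_{J_1} \cap W_{J_2} = W_{J_1 \cap J_2}$ for standard parabolic subgroups (applied iteratively) reduces the right-hand side to $W_K$. Hence $w \in W_K$.

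Finally, I would combine $w \in W_K$ with the hypothesis $w \in \prescript{K}{}\!W$. By the alternative characterization of parabolic quotients recalled in the background section, the latter says $D_{\mathsf L}(w) \subseteq S \setminus K$, while the former implies $D_{\mathsf L}(w) \subseteq K$, because any reduced word for an element of $W_K$ uses only letters from $K$ and so all left descents of $w$ lie in $K$. Together these give $D_{\mathsf L}(w) = \emptyset$, which forces $w = e$. The argument is essentially a definition chase; the only non-tautological ingredient is the intersection formula for standard parabolic subgroups, which is a well-known fact about Coxeter systems and presents no real obstacle.
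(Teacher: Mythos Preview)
Your argument is correct and follows essentially the same route as the paper: translate $\prescript{S\setminus\{s\}}{}\!w=e$ into $w\in W_{S\setminus\{s\}}$, intersect to obtain $w\in W_K$, and then use $W_K\cap\prescript{K}{}\!W=\{e\}$. The paper simply asserts this last equality, whereas you unpack it via left descents, but that is a cosmetic difference rather than a different approach.
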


\begin{proof}
The equation $\prescript{S\setminus \{s\}}{}\!w=e$ is equivalent to the containment $w\in W_{S\setminus \{s\}}$. This shows that $w\in\bigcap_{s\in S\setminus K}W_{S\setminus\{s\}}=W_K$, so $w\in W_K\cap \prescript{K}{}\!W=\{e\}$.
\end{proof}

One of our tools for leveraging the key map $\kappa$ comes from the following lemma, which helps us understand how it interacts with crystal pop-stack sorting. If $b\in\mathcal B_\lambda$, then (since $\cpop(\prescript{K}{}\!W)\subseteq \prescript{K}{}\!W$) both $\kappa(\cpop(b))$ and $\cpop(\kappa(b))$ are in $\prescript{K}{}\!W$, so it makes sense to compare them in the right weak order.

\begin{lemma}\label{lem:popkey}
For every $b\in\mathcal B_\lambda$, we have $\kappa(\cpop(b))\leq_{\mathsf R}\cpop(\kappa(b))$. 
\end{lemma}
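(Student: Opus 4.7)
The plan is to exhibit a right coset of a parabolic subgroup of $W$ that contains both $\kappa(\cpop(b))$ and $\cpop(\kappa(b))$, and then to show that $\kappa(\cpop(b))$ is the unique minimum-length representative of that coset; the desired inequality is then immediate, since the minimum-length representative of a right coset $yW_J$ is $\leq_{\mathsf R}$ every element of the coset. Throughout, set $J = \{s_i : i \in b_\downarrow\} \subseteq S$.

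First I would observe that, by the third bullet of \Cref{lem:key} applied to $b$, every right descent of $\kappa(b)$ has index lying in $b_\downarrow$, so $D_{\mathsf R}(\kappa(b)) \subseteq J$ and therefore $w_0(D_{\mathsf R}(\kappa(b))) \in W_J$. Using \Cref{lem:poppable} to identify $\cpop(\kappa(b)) = \pop_W(\kappa(b)) = \kappa(b)\cdot w_0(D_{\mathsf R}(\kappa(b)))^{-1}$, this places $\cpop(\kappa(b))$ in the right coset $\kappa(b)W_J$. To put $\kappa(\cpop(b))$ in the same coset, I would fix a saturated chain $b = b_0 > b_1 > \cdots > b_r = \cpop(b)$ with each cover $b_j \lessdot b_{j-1}$ of some color $i_j \in b_\downarrow$; such a chain exists because $\cpop(b)$ is the unique source of the connected component of $\mathcal B_\lambda|_{b_\downarrow}$ containing $b$, so any descending walk inside this restricted crystal starting at $b$ terminates at $\cpop(b)$. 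For each $j$ we have $b_{j-1} = F_{i_j}(b_j)$, and the first two bullets of \Cref{lem:key} together force $\kappa(b_{j-1}) \in \{\kappa(b_j),\,\kappa(b_j) s_{i_j}\}$. Iterating this relation yields $\kappa(b) \in \kappa(\cpop(b))W_J$, so both $\kappa(\cpop(b))$ and $\cpop(\kappa(b))$ lie in the coset $\kappa(b)W_J$.

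To finish, I would note that because $\cpop(b)$ is a source of $\mathcal B_\lambda|_{b_\downarrow}$, we have $\cpop(b)_\downarrow \cap b_\downarrow = \emptyset$. The third bullet of \Cref{lem:key} applied to $\cpop(b)$ then gives $D_{\mathsf R}(\kappa(\cpop(b))) \cap J = \emptyset$, which characterizes $\kappa(\cpop(b))$ as the unique minimum-length representative of $\kappa(b)W_J$. Consequently $\kappa(\cpop(b)) \leq_{\mathsf R} x$ for every $x \in \kappa(b)W_J$, and specializing to $x = \cpop(\kappa(b))$ gives the inequality. The main (modest) subtlety lies in the second paragraph---constructing the descent chain entirely within $\mathcal B_\lambda|_{b_\downarrow}$ and verifying that each cover perturbs $\kappa$ by at most a single simple reflection from $J$---but both points are essentially immediate consequences of the source-uniqueness definition of $\cpop(b)$ and the bulleted properties of the key map supplied by \Cref{lem:key}.
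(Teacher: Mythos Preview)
Your argument is correct and takes a genuinely different route from the paper's proof. The paper argues via the meet characterization \eqref{eqn:CoxPopDef2}: it fixes an arbitrary element $x$ covered by $\kappa(b)$ in right weak order, writes $x=\kappa(b)s_i$, descends from $b$ along the full $i$-chain to reach some $b_0$ with $E_i(b_0)=0$, shows (using \Cref{lem:key}) that $\kappa(b_0)=x$, and then concludes from $\cpop(b)\leq b_0$ and the order-preserving property of $\kappa$ that $\kappa(\cpop(b))\leq_{\mathsf R} x$. Your approach instead packages everything into a single right coset $\kappa(b)W_J$ with $J=\{s_i:i\in b_\downarrow\}$: you place $\cpop(\kappa(b))$ there via $D_{\mathsf R}(\kappa(b))\subseteq J$, place $\kappa(\cpop(b))$ there by tracking $\kappa$ along a chain in $\mathcal B_\lambda\vert_{b_\downarrow}$, and then identify $\kappa(\cpop(b))$ as the minimum-length coset representative because $D_{\mathsf R}(\kappa(\cpop(b)))\cap J=\emptyset$. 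This coset argument is slicker and avoids the case split on $\kappa(b)=e$; the paper's approach, on the other hand, stays closer to the lattice-theoretic definition of $\pop_W$ and makes the role of each individual descent more explicit. Both ultimately rely on the same bullets of \Cref{lem:key}, just organized differently.
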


\begin{proof}
Suppose first that $\kappa(b)=e$. Under the embedding of $\prescript{K}{}\!W$ into $\mathcal B_\lambda$, the identity $e$ gets identified with the minimal element $v_\lambda$. We know by \Cref{lem:key} that $\kappa$ is order-preserving, and we know that $\cpop(b)\leq b$. Therefore, $\kappa(\cpop(b))\leq_{\mathsf R}\kappa(b)=e\leq_{\mathsf R}\cpop(\kappa(b))$.

We may now assume that $\kappa(b)\neq e$. Since $\kappa(b)\in\prescript{K}{}\!W$, we know by \Cref{lem:poppable} and \eqref{eqn:CoxPopDef2} that \[\cpop(\kappa(b))=\pop_W(\kappa(b))=\bigwedge\{x\in W:x\lessdot_{\mathsf R} \kappa(b)\},\] where the cover relations and meet are in the right weak order. Hence, it suffices to show that $\kappa(\cpop(b))\leq_{\mathsf R} x$ for every $x\in W$ that is covered by $\kappa(b)$ in the right weak order. Let us fix such an element $x$. 

There exists $i\in I$ such that $s_i\in D_{\mathsf R}(\kappa(b))$ and $x=\kappa(b)s_i$. There is a monochromatic saturated chain $b_0\lessdot b_1\lessdot\cdots\lessdot b_m$ of color $i$ in $\mathcal B_\lambda$ such that $b_m=b$ and $E_i(b_0)=0$. Because $s_i$ is a right descent of $\kappa(b)$, \Cref{lem:key} tells us that $E_i(b)\neq 0$. Hence, $b_0\neq b$ (i.e., $m\geq 1$). For each $j\in[m-1]$, we have $E_i(b_j)\neq 0$ and $F_i(b_j)\neq 0$, so it follows from \Cref{lem:key} that $\kappa(b_j)=\kappa(F_i(b_j))=\kappa(b_{j+1})$. Consequently, $\kappa(b_1)=\kappa(b)$. We have $E_i(b_0)=0$ and $F_i(b_0)=b_1\neq 0$, so \Cref{lem:key} tells us that $\kappa(b_1)$ is either $\kappa(b_0)s_i$ or $\kappa(b_0)$. If $\kappa(b_1)$ were equal to $\kappa(b_0)$, then we would have $s_i\in D_{\mathsf R}(\kappa(b))=D_{\mathsf R}(\kappa(b_1))=D_{\mathsf R}(\kappa(b_0))$. However, \Cref{lem:key} would then tell us that $E_i(b_0)\neq 0$, which would be a contradiction. This shows that $\kappa(b_1)=\kappa(b_0)s_i$. Therefore, $x=\kappa(b)s_i=\kappa(b_1)s_i=\kappa(b_0)$. 

Recall that our goal is to prove that $\kappa(\cpop(b))\leq_{\mathsf R} x$. Since $x=\kappa(b_0)$ and $\kappa$ is order-preserving (by \Cref{lem:key}), it suffices to show that $\cpop(b)\leq b_0$. However, this is clear from the definition of $\cpop$. Indeed, $i\in b_\downarrow$ because the edge $b_{m-1}\lessdot b$ has the color $i$. Since there is a monochromatic saturated chain of color $i$ connecting $b_0$ to $b$, we know that $b_0$ is in the same connected component of $\mathcal B_\lambda\vert_{b_\downarrow}$ as $b$. The unique minimal element of this connected component is $\cpop(b)$ by definition, so $\cpop(b)\leq b_0$. 
\end{proof}

We are now in a position to demonstrate that $\max\limits_{b\in\mathcal B_\lambda}|O_{\cpop}(b)|=h$, thereby proving \Cref{thm:popmain}.

\begin{proof}[Proof of \Cref{thm:popmain}]
As before, let $W_K$ be the stabilizer of $\lambda$ in $W$ so that $\prescript{K}{}\!W$ embeds into $\mathcal B_\lambda$ as the orbit of $v_\lambda$. Since $\lambda$ is nonzero, it is not fixed by the entire Weyl group $W$. This implies that $W_K$ is not all of $W$. Choose some $s\in S\setminus K$, and let $J=S\setminus\{s\}$. The parabolic quotient $\prescript{J}{}\!W$ is contained in $\prescript{K}{}\!W$. Since $\prescript{J}{}\!W$ and $\prescript{K}{}\!W$ are order ideals of the right weak order on $W$, $\prescript{J}{}\!W$ is actually an order ideal of the right weak order on $\prescript{K}{}\!W$. Therefore, $\cpop$ restricts to a map $\prescript{K}{}\!W\to\prescript{K}{}\!W$ (which agrees with $\pop_W$), which further restricts to a map $\prescript{J}{}\!W\to\prescript{J}{}\!W$. Appealing to \Cref{lem:defantw0}, we find that \[\max_{b\in\mathcal B_\lambda}|O_{\cpop}(b)|\geq\max_{w\in \prescript{J}{}\!W}|O_{\pop_W}(w)|\geq |O_{\pop_W}(\prescript{J}{}\!w_0)|=h.\] We are left to show that $\cpop^{h-1}(b)=v_\lambda$ for all $b\in\mathcal B_\lambda$. Let us fix $b\in\mathcal B_\lambda$. We are going to prove that $\prescript{J}{}\!(\kappa(\cpop^{h-1}(b)))=e$. Since $s$ was chosen arbitrarily from $S\setminus K$ and $J=S\setminus\{s\}$, it will then follow from \Cref{lem:technical} that $\kappa(b)=e$. This, in turn, will imply that $b=v_\lambda$ by the fourth bullet point in \Cref{lem:key}. 

In what follows, we will always use the symbol $\cpop$, keeping in mind that this map agrees with $\pop_W$ on $\prescript{K}{}\!W$. Our goal is to prove that $\prescript{J}{}\!(\kappa(\cpop^{h-1}(b)))=e$. We will actually prove by induction on $t$ that $\prescript{J}{}\!(\kappa(\cpop^t(b)))\leq_{\mathsf B}\cpop^t(\prescript{J}{}\!w_0)$ for all $t\geq 0$. Since we know by \Cref{lem:defantw0} that $\cpop^{h-1}(\prescript{J}{}\!w_0)=e$, this will complete the proof. 

When $t=0$, we just need to show that $\prescript{J}{}\!(\kappa(b))\leq_{\mathsf B}\prescript{J}{}\!w_0$, which is clear from \Cref{lem:Lem2.1}. Now suppose $t\geq 1$, and assume we have already proven that $\prescript{J}{}\!(\kappa(\cpop^{t-1}(b)))\leq_{\mathsf B}\cpop^{t-1}(\prescript{J}{}\!w_0)$. The right descents of $\cpop^{t-1}(\prescript{J}{}\!w_0)$ all commute with each other by \Cref{lem:defantw0}, so we can set $x=\prescript{J}{}\!(\kappa(\cpop^{t-1}(b)))$ and $y=\cpop^{t-1}(\prescript{J}{}\!w_0)$ in \Cref{lem:Lem3.5} to find that 
\begin{equation}\label{Eq1}
\cpop(\prescript{J}{}\!(\kappa(\cpop^{t-1}(b))))\leq_{\mathsf B}\cpop^t(\prescript{J}{}\!w_0).
\end{equation} We know by \Cref{lem:popkey} that $\kappa(\cpop^t(b))\leq_{\mathsf R}\cpop(\kappa(\cpop^{t-1}(b)))$, so it follows from \Cref{lem:Lem2.1} that $\prescript{J}{}\!(\kappa(\cpop^t(b)))\leq_{\mathsf R}\prescript{J}{}\!(\cpop(\kappa(\cpop^{t-1}(b))))$. According to \Cref{lem:3.6}, we have $\prescript{J}{}\!(\cpop(\kappa(\cpop^{t-1}(b))))\leq_{\mathsf R}\cpop(\prescript{J}{}\!(\kappa(\cpop^{t-1}(b))))$. This shows that $\prescript{J}{}\!(\kappa(\cpop^t(b)))\leq_{\mathsf R}\cpop(\prescript{J}{}\!(\kappa(\cpop^{t-1}(b))))$, so $\prescript{J}{}\!(\kappa(\cpop^t(b)))\leq_{\mathsf B}\cpop(\prescript{J}{}\!(\kappa(\cpop^{t-1}(b))))$ because the Bruhat order is an extension of the right weak order. Combining this with \eqref{Eq1} yields $\prescript{J}{}\!(\kappa(\cpop^t(b)))\leq_{\mathsf B}\cpop^t(\prescript{J}{}\!w_0)$. This completes the induction.
\end{proof}

\section{Type A Crystals That Are Lattices}\label{sec:lattices}

Fix $n \geq 1$.  Let $\lambda=(\lambda_1,\lambda_2,\ldots,\lambda_{n},0)$ be a dominant weight for $\mathfrak{sl}_{n+1}$ (i.e., a partition with at most $n$ nonzero parts) as in \Cref{sec:type_a}. In this section, we prove that $\mathcal B_\lambda^n$ is a lattice whenever $n$ and $\lambda$ satisfy at least one of the conditions listed in \Cref{thm:main}.  It is useful to introduce the notation  $J(P)$ for the distributive lattice of order ideals of a finite poset $P$.  We can view $[a]$ as the $a$-element chain poset.

We write $\lambda^* = (\lambda_1,\lambda_1-\lambda_{n},\lambda_1-\lambda_{n-1},\ldots,\lambda_1-\lambda_2,0)$.  Associated to the type $A_n$ crystal $\mathcal B_\lambda^n$ is the \defn{dual crystal} $(\mathcal B_\lambda^n)^*$, which is defined to be the crystal obtained from $\mathcal B_\lambda^n$ by replacing each edge color $i$ by $n+1-i$ (so $\mathcal B_\lambda^n$ and $(\mathcal B_\lambda^n)^*$ have the same directed graph structure when one ignores the edge colors).  The following shows that the representation underlying the dual crystal is the dual representation.
\begin{proposition}
\label{prop:dual}
If $\lambda$ is a dominant weight for $\mathfrak{sl}_{n+1}$, then the crystals $\left(\mathcal B_\lambda^n\right)^*$ and $\mathcal B_{\lambda^*}^n$ are isomorphic. 
\end{proposition}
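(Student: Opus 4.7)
The plan is to recognize that the recoloring $i \leftrightarrow n+1-i$ is precisely the nontrivial automorphism $\sigma$ of the type $A_n$ Dynkin diagram, and that applying this diagram automorphism to the crystal of a highest-weight representation for $\mathfrak{sl}_{n+1}$ yields the crystal of the twist of that representation. Since the twist of $V^\lambda$ by $\sigma$ is isomorphic to the dual representation $V^{\lambda^*}$, it follows that $(\mathcal{B}_\lambda^n)^*\cong\mathcal{B}_{\lambda^*}^n$.

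Concretely, I would proceed as follows. First, observe that $(\mathcal{B}_\lambda^n)^*$ and $\mathcal{B}_\lambda^n$ share the same underlying directed graph; in particular $(\mathcal{B}_\lambda^n)^*$ is connected and has the same unique source $v_\lambda$. Next, endow $(\mathcal{B}_\lambda^n)^*$ with the weight function $b\mapsto\sigma(\mathrm{wt}(b))$, where $\sigma$ acts on the weight lattice by permuting simple roots via $\alpha_i\leftrightarrow\alpha_{n+1-i}$. One checks that this makes $(\mathcal{B}_\lambda^n)^*$ a normal $\mathfrak{sl}_{n+1}$-crystal: an edge of new color $n+1-i$ in $(\mathcal{B}_\lambda^n)^*$ is by definition an edge of old color $i$ in $\mathcal{B}_\lambda^n$, hence changes the old weight by $-\alpha_i$, which means it changes the new weight by $-\sigma(\alpha_i)=-\alpha_{n+1-i}$, as required by the crystal axioms.

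The new weight of the source $v_\lambda$ in $(\mathcal{B}_\lambda^n)^*$ is therefore $\sigma(\lambda)$. Using the fundamental weight decomposition $\lambda=\sum_{i=1}^{n}(\lambda_i-\lambda_{i+1})\,\omega_i$ (with the convention $\lambda_{n+1}:=0$) together with $\sigma(\omega_i)=\omega_{n+1-i}$, a short telescoping computation gives the $k$-th part of $\sigma(\lambda)$ as $\lambda_1-\lambda_{n+2-k}$, so
\[
\sigma(\lambda)=(\lambda_1,\lambda_1-\lambda_n,\lambda_1-\lambda_{n-1},\ldots,\lambda_1-\lambda_2,0)=\lambda^*.
\]
By Kashiwara's theorem that a connected normal crystal for $\mathfrak{sl}_{n+1}$ is determined up to isomorphism by the weight of its unique source, we conclude that $(\mathcal{B}_\lambda^n)^*\cong\mathcal{B}_{\lambda^*}^n$.

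The main obstacle is invoking the uniqueness theorem for highest-weight crystals cleanly, though this is standard in the crystal literature (see \cite{bump2017crystal, Kashiwara95oncrystal}). An alternative, more combinatorial route would construct an explicit shape-changing bijection between semistandard Young tableaux of shape $\lambda$ and shape $\lambda^*$ with entries in $[n+1]$ via rectangle complementation and $180^\circ$ rotation followed by $k\mapsto n+2-k$, and then verify directly that it intertwines $F_i$ with $F_{n+1-i}$. The verification of the intertwining property via parenthesis-matching on reading words is technical but routine, so I would prefer the first approach.
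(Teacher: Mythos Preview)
Your argument is correct and is essentially the same as the paper's: both identify the highest weight of the recolored crystal as $\lambda^*$ via the action of the Dynkin automorphism $\sigma$ (equivalently, $-w_0$) on the weight lattice, and then invoke a uniqueness result. The only difference is one of packaging---the paper cites the representation-theoretic fact that the dual of $V^\lambda$ is $V^{-\lambda\cdot w_0}$ and then ``upgrades to crystals,'' whereas you stay entirely on the crystal side, checking that $(\mathcal B_\lambda^n)^*$ with the twisted weight function is a connected normal crystal of highest weight $\sigma(\lambda)=\lambda^*$ and invoking uniqueness of highest-weight crystals directly.
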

\begin{proof}
For any Cartan type, the dual of $V^\lambda$ is isomorphic to $V^{-\lambda\cdot w_0}$, where $w_0$ is the longest element of the Weyl group~\cite[Proposition 21.1.2]{lusztig2010introduction}.  Restricting to type $A_n$ and renormalizing, we obtain $-\lambda\cdot w_0=-(\lambda_1,\lambda_2,\ldots,\lambda_{n},0)\cdot w_0 = (\lambda_1,\lambda_1-\lambda_{n},\lambda_1-\lambda_{n-1},\ldots,\lambda_1-\lambda_2,0) = \lambda^*$.  Upgrading to the crystals, the action of $F_i$ becomes the action of $F_{n+1-i}$ under the map $-w_0$, which explains the relabeling of the edges.
\end{proof}

\subsection{\texorpdfstring{$A_1$ and $A_2$}{A1 and A2}}
\label{sec:a1_and_a2}
The representation theory of $\mathfrak{sl}_2$ is classical, with highest-weight representations indexed by nonnegative integers $k$.  The elements of the crystal $\mathcal B_{(k)}^1$ correspond to semistandard tableaux with a single row of $k$ boxes filled with 1's and 2's. The action of $F_1$ simply converts the rightmost 1 to a 2, so that $A_1$ crystals are chains, and hence are lattices.

Using a graph-theoretic construction, it was shown in~\cite[Proposition 5.4]{danilov2007combinatorics} that $A_2$ crystals all have a particular product representation, from which it follows that they are lattices.

\subsection{\texorpdfstring{$\lambda=(1^m)$}{Minuscules}}
\label{sec:minuscule}
The stabilizer of $\lambda$ in $\mathfrak S_{n+1}$ is $(\mathfrak S_{n+1})_K$, where $K=[n]\setminus\{m\}$. The crystal $\mathcal{B}_{(1^m)}^n$ corresponds to a minuscule representation, and it is well known to be isomorphic to the distributive lattice $J([m]\times [n+1-m])$.  Indeed, because $V^{(1^m)}$ is minuscule, the weights in the representation appear with multiplicity $1$ and all lie in the $\mathfrak{S}_{n+1}$-orbit of $\lambda=(1^m)$---thus, for any $v \in \mathcal{B}_{(1^m)}^n$, we have $v\cdot s_i = F_i(v)$ if $F_i(v)\neq 0$ and $v\cdot s_i=v$ otherwise.  This shows that the embedding $\prescript{K}{}\!\mathfrak S_{n+1}\to\mathcal B_{(1^m)}^n$ discussed in \Cref{subsec:crystals} is an isomorphism of posets. In other words, $\mathcal B_{(1^m)}^n$ is isomorphic to the weak order on the set of permutations $w\in \mathfrak S_{n+1}$ satisfying $D_{\mathsf L}(w)\subseteq\{m\}$; this is easily seen to be isomorphic to the distributive lattice $J([m]\times [n+1-m])$.

More directly, thinking of elements of $J([m]\times [n+1-m])$ as partitions fitting inside an $m\times (n+1-m)$ box, an isomorphism of posets is given by mapping the tableau \[{\ytableausetup{boxsize=1.2em}\begin{ytableau}
a_1 \\ a_2 \\ \raisebox{-.08cm}{\vdots} \\ a_m
\end{ytableau}}\] in $\mathcal B_{(1^m)}^{n}$ to the partition $(a_m-m,\cdots,a_2-2,a_1-1)$; hence, the tableaux are ordered by componentwise comparison.

\subsection{\texorpdfstring{$\lambda=(k)$ or $(k^n)$}{Multiples of vector representation}}
\label{sec:mult_first_fund_weight}
By~\Cref{prop:dual}, the two crystals $\mathcal B_{(k)}^n$ and $\mathcal B_{(k^n)}^n$ are dual, so they are isomorphic as unlabeled directed graphs.  We therefore need only consider $\lambda=(k)$. We will define a poset isomorphism between the poset underlying $\mathcal B_{(k)}^{n}$ and the distributive lattice $J([k]\times [n])$.  Thinking of elements of $J([k]\times [n])$ as partitions fitting inside a $k\times n$ box, a tableau \raisebox{-.25\height}{\ytableausetup{boxsize=1.2em}\begin{ytableau}
a_1 & a_2 & \cdots & a_k
\end{ytableau}} in $\mathcal B_{(k)}^{n}$ maps to the partition $(a_k-1,\cdots,a_2-1,a_1-1)$.  Since the partition has a single row, there is no pairing of $i$'s and $(i+1)$'s when computing the action of $F_i$---thus, $F_i$ changes the rightmost $i$ to an $i+1$, which exactly corresponds to a cover in $J([k]\times [n])$.  Note that these tableaux are ordered by componentwise comparison: $\raisebox{-.25\height}{\ytableausetup{boxsize=1.2em}\begin{ytableau}
a_1 & a_2 & \cdots & a_k
\end{ytableau}}\leq \raisebox{-.25\height}{\ytableausetup{boxsize=1.2em}\begin{ytableau}
b_1 & b_2 & \cdots & b_k
\end{ytableau}}$ in $\mathcal B_{(k)}^{n}$ if and only if $a_i \leq b_i$ for all $1 \leq i \leq k$.

\ytableausetup
 {mathmode, boxsize=2em}
\subsection{\texorpdfstring{$\lambda=(2,1^m)$ or $(2^{n-m},1^m)$}{Long hooks}}
By~\Cref{prop:dual}, the two crystals $\mathcal B_{(2,1^m)}$ and $\mathcal B_{(2^{n-m},1^m)}$ are dual, so we consider only $\lambda=(2,1^m)$.  We will show that any two elements of $\mathcal B_\lambda^n$ have a join; since the crystal has a unique minimal element, this will imply that it is a lattice. Let $T,T'$ be two given tableaux in $\mathcal B_\lambda^n$.  If $n+1$ does not appear in $T$ or $T'$, then the interval from \[\raisebox{0.5\height}{\begin{ytableau}
1 & 1 \\
2 \\
\raisebox{-.06cm}{\vdots} \\
\scalebox{.8}{{\it m}+1}
\end{ytableau}} \hspace{2em} \text{ to }  \hspace{2em} \raisebox{0.5\height}{\begin{ytableau}
\scalebox{.8}{{\it n{-}m}} & n \\
\scalebox{.6}{{\it n}{-}{\it m}{+}1} \\
\raisebox{-.06cm}{\vdots} \\
n\end{ytableau}} \]
in $\mathcal B_{(2,1^m)}^n$ matches the same interval in $\mathcal B_{(2,1^m)}^{n-1}$, and $T$ and $T'$ both appear within this interval.  By induction on $n$ (the base case when $m=n$ is handled by~\Cref{sec:minuscule} because $\mathcal B_{(2,1^n)}^n$ is isomorphic to $\mathcal{B}_{(1)}^n$ since the first column contains all numbers $1,2,\ldots,n+1$), this interval is a lattice, so we can find the join of $T$ and $T'$.   

Now assume that $n+1$ appears in $T$ or $T'$. Let $a=T(1,2)$, $a'=T'(1,2)$, $b=T(m+1,1)$, and $b'=T(m+1,1)$.  One of $a,a',b,b'$ must be equal to $n+1$. By symmetry, we may assume that either $a$ or $b$ is $n+1$. 

\begin{itemize}
    \item Suppose $a=n+1$ and $b,b'\neq n+1$. Consider the interval between \[A^{(0)}=\raisebox{0.5\height}{\begin{ytableau}
1 & \scalebox{.8}{{\it n}+1} \\
2 \\
\vdots \\
\scalebox{.8}{{\it m}+1}
\end{ytableau}}\hspace{2em}\text{ and }\hspace{2em} A^{(1)}=\raisebox{0.5\height}{\begin{ytableau}
\scalebox{.8}{{\it n{-}m}}  & \scalebox{.8}{{\it n}+1} \\
\scalebox{.6}{{\it n}{-}{\it m}{+}1} \\
\vdots \\
n
\end{ytableau}}.\]
This interval $[A^{(0)},A^{(1)}]$ is isomorphic to the crystal $\mathcal B_{(1^{m+1})}^{n-1}$ addressed in~\Cref{sec:minuscule} (and hence tableaux are ordered by component-wise comparison), so it is a lattice.  Note that $T\in[A^{(0)},A^{(1)}]$.
\begin{itemize}
\item If $a'\geq b'$, then the tableau $T''=F_n \cdots F_{a'+1}F_{a'} (T')$ is the smallest element of $[A^{(0)},A^{(1)}]$ greater than $T'$, so the least upper bound of $T$ and $T'$ is $T \vee T''$, the join of $T$ and $T''$ in $[A^{(0)},A^{(1)}]$.  By~\Cref{sec:minuscule}, this is also the tableau given by component-wise maximum of the first columns of $T$ and $T'$ that has entry $n+1$ in the cell $(1,2)$.

\item If $a'< b'$, then any upper bound $X$ for $A_0$ and $T'$ must have $X(1,2)=n+1$.  Let us consider how to start with $T'$ and increase the entry in cell $(1,2)$ from $a'$ to $n+1$.  For each $i=a',a'+1,\ldots,n$, in any path from $T'$ to $X$, we must at some point apply $F_i$ to increase the entry in cell $(1,2)$ from $i$ to $i+1$.  For this $F_i$ to have any effect on the entry in cell $(1,2)$, any entry equal to $i-1$ in the first row of the tableau needs to have already been increased to $i$.  Thus, at some point, the entry in cell $(m+1,1)$ must be increased to $n+1$.  In fact, since $a'< b'$, we can increase the entry in cell $(m+1,1)$ to $n+1$ without affecting any other entries of $T'$ to produce $T''=F_n \cdots F_{b'+1}F_{b'}(T')$---and any sequence of $F_i$ operators leading to $X$ could be prefaced by this initial sequence.  We can therefore reduce to the case when $b'=n+1$. By symmetry, we can handle the case when $b=n+1$ instead.
\end{itemize}

\item If $b=n+1$, then we consider the interval between
\[B^{(0)}=\raisebox{0.5\height}{\begin{ytableau}
1 & 1 \\
2 \\
\vdots \\
m\\
\scalebox{.8}{{\it n}+1}
\end{ytableau}}\hspace{2em}\text{ and }\hspace{2em} B^{(1)}=\raisebox{0.5\height}{\begin{ytableau}
\scalebox{.8}{{\it n{-}m}}  & \scalebox{.8}{{\it n}+1} \\
\scalebox{.6}{{\it n}{-}{\it m}{+}1} \\
\vdots \\
n\\
\scalebox{.8}{{\it n}+1}
\end{ytableau}}.\]
This interval $[B^{(0)},B^{(1)}]$ is isomorphic to $\mathcal B_{(2,1^{m-1})}^n$, so it is a lattice by induction on $m$.  Note that $T\in [B^{(0)},B^{(1)}]$.   Any upper bound $X \in [B^{(0)},B^{(1)}]$ for $T'$ must have $X(m+1,1)=n+1$, so we should consider how we can start with $T'$ and increase the entry in cell $(m+1,1)$ from $b'$ to $n+1$.
\begin{itemize}
\item If $a'< b'$, then we can increase the entry in cell $(m+1,1)$ to $n+1$ without affecting any other entries of $T'$ to produce $T''=F_n \cdots F_{b'+1}F_{b'}(T')$---and any sequence of $F_i$ operators leading to $X$ could clearly be reordered to be prefaced by this initial sequence, so $T''$ is the least upper bound of $B^{(0)}$ and $T'$.  The join of $T'$ and $T$ can now be computed in $[B^{(0)},B^{(1)}]$ as $T''\vee T$.

\item Otherwise, $a'\geq b'$.  For $i=b',b'+1,\ldots,n$, in any path from $T'$ to $X$, we must at some point apply $F_i$ to increase the entry in cell $(m+1,1)$ from $i$ to $i+1$.  The only reason this might not work is if the entry in cell $(1,2)$ is also $i$, at which point the entry in cell $(1,2)$ would be increased.  So any upper bound $X \in [B^{(0)},B^{(1)}]$ with $X(m+1,1)=n+1$ must also have $X(1,2)=n+1$.  Define $T'' \in  [B^{(0)},B^{(1)}]$ by
\[T'' = F_n \cdots F_{b'+1} F_{b'} F_{n}\cdots F_{a'+1}F_{a'}(T')\]
so that \begin{align*}T''(i,j)&= \begin{cases}n+1 & \text{if } (i,j)=(m+1,1) \text{ or } (1,2) \\ T'(i,j) & \text{otherwise.} \end{cases}\end{align*} Then $T''$ is clearly minimal among all upper bounds $X$ for $T'$ with $X(m+1,1)=n+1$ and $X(1,2)=n+1$ (such upper bounds being ordered by componentwise comparison).
\end{itemize}
\end{itemize}

\subsection{\texorpdfstring{$\lambda=(k,1)$ or $(k^{n-1},k-1)$}{Wide hooks}}
By~\Cref{prop:dual}, the two crystals $\mathcal B_{(k,1)}^n$ and $\mathcal B_{(k^{n-1},k-1)}^n$ are dual, so we consider only $\lambda=(k,1)$.  The argument is almost identical to the one in the previous subsection.  Let $T,T'$ be given tableaux in $\mathcal B_\lambda^n$.  As in the previous subsection, it suffices to prove that $T$ and $T'$ have a join. If $n+1$ does not appear in $T$ or $T'$, then the interval from \[\raisebox{0.5\height}{\begin{ytableau}
1 & 1 & 1 & \cdots & 1 & 1 \\
2
\end{ytableau}}  \hspace{2em} \text{ to }  \hspace{2em} \raisebox{0.5\height}{\begin{ytableau}
\scalebox{.8}{{\it n}{-}1} & n & n & \cdots & n & n \\
n
\end{ytableau}}\]
in $\mathcal B_{(k,1)}^n$ matches the same interval in $\mathcal B_{(k,1)}^{n-1}$, and $T$ and $T'$ both appear within this interval.  By induction on $n$ (the base case $n=2$ is handled by~\Cref{sec:a1_and_a2} because $\mathcal B_{(k,1)}^2$ is a type $A_2$ crystal), this interval is a lattice, and we can find the join of $T$ and $T'$.  

Now assume that $n+1$ appears in $T$ or $T'$. Let $a=T(2,1)$, $a'=T'(2,1)$, $b=T(1,k)$, and $b'=T'(1,k)$.  At least one of $a,a',b,b'$ must be equal to $n+1$. By symmetry, we may assume that either $a$ or $b$ is $n+1$.

\begin{itemize}
    \item Suppose $a=n+1$ and $b,b'\neq n+1$. Consider the interval between \[A^{(0)}=\raisebox{0.5\height}{\begin{ytableau}
1 & 1 & 1 & \cdots & 1 & 1 \\
\scalebox{.8}{{\it n}{+}1}
\end{ytableau}} \hspace{2em}\text{ and }\hspace{2em} A^{(1)}=\raisebox{0.5\height}{\begin{ytableau}
n & n & n & \cdots & n & n \\
\scalebox{.8}{{\it n}{+}1}
\end{ytableau}}.\]
This interval $[A^{(0)},A^{(1)}]$ is isomorphic to $\mathcal B_{(k)}^{n-1}$ (and hence tableaux are ordered by component-wise comparison), so it is a lattice. Note that $T\in[A^{(0)},A^{(1)}]$.
\begin{itemize}
\item If $a'>b'$, then the tableau $T''=F_n \cdots F_{a'+1}F_{a'} (T')$ is the unique minimal element of the interval $[A^{(0)},A^{(1)}]$ greater than $T'$, so the least upper bound of $T$ and $T'$ is $T \vee T''$, the join of $T$ and $T''$ in $[A^{(0)},A^{(1)}]$.  By~\Cref{sec:mult_first_fund_weight}, this is also the tableau given by component-wise maximum of the first rows of $T$ and $T'$ that has entry $n+1$ in the cell $(2,1)$.

\item If $a'\leq b'$, then any upper bound $X$ for $A^{(0)}$ and $T'$ must have $X(2,1)=n+1$.  Let us consider how to start with $T'$ and increase the entry in cell $(2,1)$ from $a'$ to $n+1$.  For each $i=a',a'+1,\ldots,n$, in any path from $T'$ to $X$, we must at some point apply $F_i$ to increase the entry in cell $(2,1)$ from $i$ to $i+1$.  For this $F_i$ to have any effect on the entry in cell $(2,1)$, all entries equal to $i$ in the first row of the tableau will need to have already been increased to $i+1$ (no parenthesis pairing is possible with entries $i+1$, which all appear further to the right in the first row).  Thus, at some point, the entry in cell $(1,k)$ must be increased to $n+1$.  In fact, since $a'\leq b'$, we can increase the entry in cell $(1,k)$ to $n+1$ without affecting any other entries of $T'$ to produce $T''=F_n \cdots F_{b'+1}F_{b'}(T')$---and any sequence of $F_i$ operators leading to $X$ could be prefaced by this initial sequence.  We can therefore reduce to the case when $b'=n+1$. By symmetry, we can handle the case when $b=n+1$ instead.
\end{itemize}

\item If $b=n+1$, then we consider the interval between
\[B^{(0)}=\raisebox{0.5\height}{\begin{ytableau}
1 & 1 & 1 & \cdots & 1 & \scalebox{.8}{{\it n}{+}1} \\
2
\end{ytableau}} \hspace{2em}\text{ and }\hspace{2em} B^{(1)}=\raisebox{0.5\height}{\begin{ytableau}
n & \scalebox{.8}{{\it n}{+}1} & \scalebox{.8}{{\it n}{+}1} & \cdots & \scalebox{.8}{{\it n}{+}1} & \scalebox{.8}{{\it n}{+}1} \\
\scalebox{.8}{{\it n}{+}1}
\end{ytableau}}.\]
This interval $[B^{(0)},B^{(1)}]$ is isomorphic to $\mathcal B_{(k-1,1)}^n$, so it is a lattice by induction on $k$ (the case $k=1$ being taken care of by~\Cref{sec:minuscule}). Note that $T\in[B^{(0)},B^{(1)}]$.   Any upper bound $X$ for $T'$ in the interval must have $X(1,k)=n+1$, so we should consider how we can start at $T'$ and increase the entry in cell $(1,k)$ from $b'$ to $n+1$.
\begin{itemize}
\item If $a'\leq b'$, then we can increase the entry in cell $(1,k)$ to $n+1$ without affecting any other entries of $T'$ to produce $T''=F_n \cdots F_{b'+1}F_{b'}(T')$---and any sequence of $F_i$ operators leading to $X$ could be prefaced by this initial sequence, so $T''$ is the least upper bound of $B_0$ and $T'$.  The join of $T'$ and $T$ can now be computed in $[B^{(0)},B^{(1)}]$ as $T''\vee T$.

\item Otherwise, $a'>b'$.  Let $T' = T'_0 \xrightarrow{F_{t_1}} T'_1 \xrightarrow{F_{t_2}} \cdots \xrightarrow{F_{t_\ell}} T'_\ell = X$ be any path from $T'$ to the upper bound $X \in [B^{(0)},B^{(1)}]$.  Since $X(1,k)=n+1$, for each $i=b',b'+1,\ldots,n$, there is a step in this path $T'_{u_i} \xrightarrow{F_i} T'_{u_i+1}$ with $T'_{u_i}(1,k)=i$ and $T'_{u_i+1}(1,k)=i+1$.  For each $i$, in order to increase $T'_{u_i}(1,k)$ from $i$ to $i+1$, it must be the case that either $T'_{u_i}(2,1)>i+1$ or that $T'_{u_i}(2,1)=i+1$ and $T'_{u_i}(1,j_i)=i$ for some $j_i< k$.  
%Note that at some point, the entry $T'_{u_i}(1,j_i)=T'_{u_i}(2,1)-1$,
Note that for some $i$, we must have $T'_{u_i}(1,j_i)=T'_{u_i}(2,1)-1$ since the entry in cell $(2,1)$ cannot be increased past $n+1$.  Let $j$ be the smallest positive integer with $X(1,j) \geq a'-1$, noting that $j\leq k-1$ by the previous observation. Then $T'_r(1,j)<X(2,1)$ for all $r=0,1,\ldots,\ell$ since $T'_r(1,j)$ must pair with $T'_r(2,1)\geq a'$ before it can equal or exceed it.  Thus, $X(1,j)<X(2,1)$. 

Define $T'' \in [B^{(0)},B^{(1)}]$ by \[ \hspace{.25cm}T'' = \left(F_n \cdots F_{a'} F_{a'-1}\right) \left(F_{a'-2} \cdots F_{T'(1,k-1)+1} F_{T'(1,k-1)}\right) \left(F_{a'-2} \cdots F_{b'+1} F_{b'}\right)(T') \] so that  \[T''(i,j)= \begin{cases}a'-1 & \text{if } (i,j)=(1,k-1) \\ n+1 & \text{if } (i,j)=(1,k) \\ T'(i,j) & \text{otherwise.} \end{cases}\] 

We claim that the minimal upper bound for $T'$ in the interval $[B^{(0)},B^{(1)}]$ is the tableau $T''$.  For let $j$ be the smallest positive integer with $X(1,j) \geq a'-1$, so that $X(1,j) < X(2,1)$ as above.  From $T''$, we can construct a path to $X$ by first increasing the entry in every cell $(1,j),(1,j+1),\ldots,(1,k-2)$ in the first row of $T''$ to $a'-1$, then increasing the entries in cells $(1,j+1),\ldots,(1,k-2),(1,k-1)$ to their values in $X$ (as the entry in cell $(1,j)$ is now paired with that in cell $(2,1)$ when applying $F_{a'}$), then increasing the entries in cells $(1,j-1),\ldots,(1,2),(1,1)$ to their values in $X$, then increasing the entry in cell $(2,1)$ to its value in $X$, and finally increasing the entry in cell $(1,j)$ to its value in $X$.
\end{itemize}
\end{itemize}

\begin{comment}
If $a=a' \neq n+1$, then the interval from \raisebox{0.5\height}{\begin{ytableau}
1 & 1 & 1 & \cdots & 1 & a \\
2
\end{ytableau}} to \raisebox{0.5\height}{\begin{ytableau}
a & a & a & \cdots & a & a \\
n+1
\end{ytableau}}
in $\mathcal B_(k,1)^n$ is isomorphic to the interval from
\raisebox{0.5\height}{\begin{ytableau}
1 & 1 & 1 & \cdots & 1\\
2
\end{ytableau}} to \raisebox{0.5\height}{\begin{ytableau}
a & a & a & \cdots & a\\
n{+}1
\end{ytableau}}
in $\mathcal B_{(k-1,1)}^n$, and hence we are done by induction.  If $a=a' = n+1$, then 
\end{comment}

\subsection{\texorpdfstring{$A_3$ and $\lambda=(3,2,1)$}{A3 and (3,2,1)}}
The exception of $\lambda=(3,2,1)$ for $A_3$ was checked by computer~\cite{Sage-Combinat}.

\ytableausetup{smalltableaux}
\section{Type A Crystals That Are Not Lattices}\label{sec:notlattices}
Let us fix an integer $n\geq 3$ and consider the type~$A_n$ crystal $\mathcal B_\lambda^n$ associated to a partition $\lambda=(\lambda_1,\ldots,\lambda_n,0)$. Throughout this section, we always let $\ell$ denote the number of positive parts of $\lambda$. We will often omit the trailing $0$'s and write $\lambda=(\lambda_1,\ldots,\lambda_\ell)$. The purpose of this section is to prove a sequence of lemmas that, when put together, demonstrate that $\mathcal B_\lambda^n$ is not a lattice if $n$ and $\lambda$ satisfy none of the conditions listed in \Cref{thm:main}. 

As a first step, we establish three lemmas that will allow us to simplify our notation later. If $(\lambda_1,\ldots,\lambda_\ell)$ is a partition and $1\leq a\leq b\leq \ell$, then we call the partition $(\lambda_a,\ldots,\lambda_b)$ a \defn{consecutive subpartition} of $\lambda$. 

\begin{lemma}\label{lem:removerows}
Let $\lambda=(\lambda_1,\ldots,\lambda_\ell)$ be a partition, and let $\mu$ be a consecutive subpartition of $\lambda$ that has $m$ parts. Suppose $n\geq \ell$. Then $\mathcal B_\mu^{n-\ell+m}$ is isomorphic to an interval in $\mathcal B_\lambda^n$. In particular, if $\mathcal B_\mu^{n-\ell+m}$ is not a lattice, then $\mathcal B_\lambda^n$ is not a lattice. 
\end{lemma}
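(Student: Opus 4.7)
The plan is to construct an explicit poset isomorphism between $\mathcal B_\mu^{n-\ell+m}$ and an interval $[T^{\min}, T^{\max}]$ in $\mathcal B_\lambda^n$. Write $\mu = (\lambda_a, \ldots, \lambda_b)$ with $m = b - a + 1$. Define $T^{\min}$ to be the tableau of shape $\lambda$ with row $j$ filled with $j$'s for $1 \leq j \leq b$ and with $(n - \ell + j + 1)$'s for $b < j \leq \ell$, and define $T^{\max}$ to have row $j$ filled with $j$'s for $1 \leq j < a$ and with $(n - \ell + j + 1)$'s for $a \leq j \leq \ell$; the hypothesis $n \geq \ell$ ensures these are semistandard. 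The map $\phi \colon \mathcal B_\mu^{n-\ell+m} \to \mathcal B_\lambda^n$ sends $S$ to the tableau whose rows outside $[a,b]$ agree with $T^{\min}$ (equivalently $T^{\max}$) and whose middle rows $a, a+1, \ldots, b$ are the rows of $S$ with every entry shifted up by $a-1$. This $\phi$ is clearly injective and sends the minimum and maximum of $\mathcal B_\mu^{n - \ell + m}$ to $T^{\min}$ and $T^{\max}$.

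Next I would verify that $\phi$ intertwines lowering operators via the color shift $i \mapsto i + a - 1$, i.e.\ that $\phi(F_i(S)) = F_{i+a-1}(\phi(S))$ for each $i \in [n-\ell+m]$. The outer rows of $\phi(S)$ have entries in $[1, a-1] \cup [n-\ell+a+m+1, n+1]$, so neither $i+a-1$ nor $i+a$ appears outside the middle rows. Consequently, the parenthesis-pairing rule for $F_{i+a-1}$ applied to the reading word of $\phi(S)$ coincides (up to a uniform shift in entry labels) with the rule for $F_i$ applied to the reading word of $S$. This shows $\phi$ carries cover relations to cover relations of appropriately shifted color; in particular every $\phi(S)$ lies in $[T^{\min}, T^{\max}]$, since $S$ is reachable from the minimum of $\mathcal B_\mu^{n-\ell+m}$ by covers and a symmetric argument bounds $\phi(S)$ above by $T^{\max}$.

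The main obstacle is the reverse inclusion: every $T \in [T^{\min}, T^{\max}]$ must lie in the image of $\phi$, i.e., its outer rows must be frozen to the outer-row values of $T^{\min}$. I plan to argue this by a weight calculation. Expanding $\epsilon_j - \epsilon_{n-\ell+j+1}$ for $j \in [a, b]$ as a telescoping sum of consecutive simple roots shows that $\text{wt}(T^{\min}) - \text{wt}(T^{\max})$ is a nonnegative integer combination of exactly the $\alpha_k$ with $k \in [a, n-\ell+a+m-1]$. For $T$ in the interval, fix a saturated chain $T^{\min} = T_0 \lessdot T_1 \lessdot \cdots \lessdot T_r = T$ with $T_{k-1} \lessdot T_k$ colored $i_k$, so that $\text{wt}(T^{\min}) - \text{wt}(T) = \sum_k \alpha_{i_k}$. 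Together with a similar chain from $T$ to $T^{\max}$, these express $\text{wt}(T^{\min}) - \text{wt}(T^{\max})$ as a sum of two nonnegative combinations of simple roots, so by linear independence of the simple roots every $i_k$ lies in $[a, n-\ell+a+m-1]$. A short induction along the chain then shows each $F_{i_k}$ only modifies cells in the middle rows, because the outer-row entries of $T^{\min}$ are all either strictly less than $a$ or strictly greater than $n-\ell+a+m$, and hence cannot equal $i_k$ or $i_k+1$.

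Combining these two directions, $\phi$ is a bijection $\mathcal B_\mu^{n-\ell+m} \to [T^{\min}, T^{\max}]$ preserving cover relations in both directions, and is therefore a poset isomorphism. The ``in particular'' clause is then immediate: any interval in a lattice is itself a lattice (meets and joins are inherited and automatically fall inside the interval), so if $\mathcal B_\mu^{n-\ell+m}$ is not a lattice, neither is its image $[T^{\min}, T^{\max}]$, and consequently $\mathcal B_\lambda^n$ cannot be a lattice.
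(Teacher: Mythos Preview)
Your proof is correct and takes a genuinely different route from the paper's. The paper reduces by induction on $\ell-m$ to the case of removing a single extreme row (say the top one), where the image of the embedding is simply the set of tableaux in $\mathcal B_\lambda^n$ whose first row consists entirely of $1$'s; this set is visibly an order ideal, and the intertwining $\iota\circ F_i=F_{i+1}\circ\iota$ then suffices, with no weight computation needed. You instead treat an arbitrary consecutive block $[a,b]$ in one shot by freezing both the top rows (to small values) and the bottom rows (to large values) simultaneously, and you use a root-lattice argument---writing $\text{wt}(T^{\min})-\text{wt}(T^{\max})$ in the simple-root basis and invoking linear independence---to show that every saturated chain inside $[T^{\min},T^{\max}]$ uses only colors in $[a,\,n-\ell+b]$, hence never disturbs the frozen rows. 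Your approach buys an explicit description of the interval in the general case and avoids the induction; the paper's approach is more elementary but needs the inductive step together with the (suppressed) dual argument for removing the bottom row.
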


\begin{proof}
It suffices to prove the lemma in the case when $\ell-m=1$; indeed, the general case will then follow by induction on $\ell-m$. Thus, we may assume $\mu$ is $(\lambda_2,\ldots,\ldots\lambda_\ell)$ or $(\lambda_1,\ldots,\lambda_{\ell-1})$. We will assume $\mu=(\lambda_2,\ldots,\lambda_\ell)$; the proof in the other case is very similar. 

Given a tableau $T\in\mathcal B_\mu^{n-1}$, let $\iota(T)$ be the tableau in $\mathcal B_\lambda^{n}$ satisfying $\iota(T)(1,j)=1$ for all $j\in[\lambda_1]$ and $\iota(T)(i,j)=T(i-1,j)+1$ for all $i\in\{2,\ldots,\ell\}$ and $j\in[\lambda_i]$. This defines a natural injection $\iota\colon\mathcal B_\mu^{n-1}\to\mathcal B_\lambda^n$. Note that $\iota(\mathcal B_\mu^{n-1})$ is equal to the set of tableaux in $\mathcal B_\lambda^n$ whose first rows only have $1$'s. It follows from the combinatorial description of the crystal partial order that $\iota(\mathcal B_{\mu}^{n-1})$ is an order ideal of $\mathcal B_\lambda^n$. To prove the desired result, we must show that for $T,T'\in\mathcal B_{\mu}^{n-1}$, we have $T\leq T'$ if and only if $\iota(T)\leq\iota(T')$.

Note that $\iota\circ F_i=F_{i+1}\circ\iota$. This observation implies that $\iota$ is order-preserving, meaning that if $T,T'\in\mathcal B_{\mu}^{n-1}$ are such that $T\leq T'$, then $\iota(T)\leq\iota(T')$. 

To prove the converse, assume $T,T'\in\mathcal B_\mu^{n-1}$ are such that $\iota(T)\leq\iota(T')$. Since $\iota(\mathcal B_{\mu}^{n-1})$ is an order ideal of $\mathcal B_\lambda^n$, there exist $T_1,\ldots,T_r$ such that $T=T_1$, $T'=T_r$, and $\iota(T_k)\lessdot\iota(T_{k+1})$ for all $k\in[r-1]$. Fix $k\in[r-1]$. Let $i\in[n]$ be such that $\iota(T_{k+1})=F_i(\iota(T_k))$. Because $\iota(T_k)$ and $\iota(T_{k+1})$ both have only $1$'s in their first rows, we know that $i\geq 2$. We know that $F_i\circ\iota=\iota\circ F_{i-1}$, so $\iota(T_{k+1})=\iota(F_{i-1}(T_k))$. Hence, $T_{k+1}=F_{i-1}(T_k)$. This shows that $T_k\lessdot T_{k+1}$. As $k$ was arbitrary, we must have $T_1\lessdot\cdots\lessdot T_r$. Therefore, $T\leq T'$. 
\end{proof}

\begin{lemma}\label{lem:removecolumns}
Let $\lambda=(\lambda_1,\ldots,\lambda_\ell)$ be a partition, and consider a nonnegative integer $t< \lambda_\ell$. Let $\mu=(\lambda_1-t,\ldots,\lambda_\ell-t)$ be the partition whose Young diagram is obtained by deleting the first $t$ columns from that of $\lambda$. Suppose $n\geq \ell$. Then $\mathcal B_\mu^{n}$ is isomorphic to a principal order ideal in $\mathcal B_\lambda^n$. In particular, if $\mathcal B_\mu^{n}$ is not a lattice, then $\mathcal B_\lambda^n$ is not a lattice. 
\end{lemma}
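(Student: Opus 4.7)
The plan is to parallel the proof of \Cref{lem:removerows} by constructing an explicit embedding $\iota\colon \mathcal{B}_\mu^{n}\to\mathcal{B}_\lambda^n$ that prepends $t$ minimal columns. For $T\in\mathcal{B}_\mu^n$, define $\iota(T)$ by $\iota(T)(i,j)=i$ for $j\in[t]$ and $\iota(T)(i,j)=T(i,j-t)$ for $t+1\leq j\leq \lambda_i$. Since each prepended column has entries $1,2,\ldots,\ell$ from top to bottom (the columnwise minimum), and column strictness of $T$ forces $T(i,1)\geq i$, the tableau $\iota(T)$ is semistandard of shape $\lambda$ with maximum entry at most $n+1$. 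The image $\iota(\mathcal{B}_\mu^n)$ is exactly the set of $T'\in\mathcal{B}_\lambda^n$ satisfying $T'(i,j)=i$ for all $j\in[t]$.

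The main step, and the place where I expect the principal difficulty to lie, is proving the intertwining relation $\iota\circ F_i=F_i\circ\iota$ (with the convention $\iota(0)=0$) for all $i\in[n]$. To see this, compare the subwords of the reading words of $\iota(T)$ and $T$ consisting of letters in $\{i,i+1\}$: the only difference is that $\iota(T)$'s subword contains $t$ additional copies of $i+1$ (contributed by the prepended entries in row $i+1$, when this row exists) immediately followed by $t$ additional copies of $i$ (contributed by the prepended entries in row $i$). Under the parenthesization convention sending $i+1\mapsto ($ and $i\mapsto )$, these $t$ added open parentheses cancel with the adjacent $t$ added closing parentheses, so the remaining unmatched parentheses---in particular the rightmost unmatched closing parenthesis---in $\iota(T)$'s subword correspond bijectively to those of $T$'s subword. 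The entry of $\iota(T)$ modified by $F_i$ therefore lies in the non-prepended region and coincides with the entry of $T$ modified by $F_i$, giving the intertwining identity.

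The remaining arguments closely mirror those of \Cref{lem:removerows}. Each cover relation in $\mathcal{B}_\lambda^n$ is given by some $F_i$ strictly increasing a single entry by $1$, so $T_1\leq T_2$ in $\mathcal{B}_\lambda^n$ implies $T_1(i,j)\leq T_2(i,j)$ for all cells $(i,j)$. Combined with the semistandard lower bound $T_1(i,j)\geq i$, this shows that any element of $\mathcal{B}_\lambda^n$ below an element of the image is itself in the image, so $\iota(\mathcal{B}_\mu^n)$ is an order ideal. The intertwining relation gives one direction of order-preservation for $\iota$; conversely, a saturated chain in $\mathcal{B}_\lambda^n$ between two tableaux in the image stays in the image by the order-ideal property and lifts back to a saturated chain in $\mathcal{B}_\mu^n$ via the intertwining relation, so $\iota$ is a poset isomorphism onto its image. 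Finally, $\iota$ sends the unique maximum element of $\mathcal{B}_\mu^n$ (the tableau with entry $n-\ell+i+1$ in every cell of row $i$) to the unique maximum of the image, exhibiting $\iota(\mathcal{B}_\mu^n)$ as a principal order ideal.
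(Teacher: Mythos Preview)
Your proposal is correct and follows essentially the same approach as the paper: the paper defines the identical map (calling it $\eta$), asserts the intertwining $\eta\circ F_i=F_i\circ\eta$, and deduces the order-ideal and isomorphism statements in the same way, though with less detail than you provide.

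Two small inaccuracies are worth noting. First, the intertwining relation as you state it, with the convention $\iota(0)=0$, is not literally true: when $i=\ell$ there is no row $i+1$, so the prepended block contributes $t$ unmatched closing parentheses at the start of the subword; if $F_\ell(T)=0$ one can still have $F_\ell(\iota(T))\neq 0$ (it changes a prepended entry). Only the one-sided statement ``$F_i(T)\neq 0\implies \iota(F_i(T))=F_i(\iota(T))$'' holds in general, and that is all your argument actually uses. Second, for $i\le \ell-1$ the $t$ added $(i+1)$'s and the $t$ added $i$'s are not \emph{immediately} adjacent in the subword: the entries of $T$'s row $i+1$ (which, being $\ge i+1$, contribute only further $(i+1)$'s) sit between them. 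The resulting block has the form $(^t(^a)^t$ rather than $(^t)^t$, but your cancellation conclusion survives, since processing $(^t(^a)^t$ leaves exactly $a$ open parentheses on the stack, the same as processing $(^a$ alone; hence the matching on the remainder of the word is identical and the rightmost unmatched closing parenthesis is unchanged.
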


\begin{proof}
Given a tableau $T\in\mathcal B_\mu^{n}$, let $\eta(T)$ be the tableau in $\mathcal B_\lambda^{n}$ satisfying $\eta(T)(i,j)=i$ for all $i\in[\ell]$ and $j\in[t]$ and satisfying $\iota(T)(i,j)=T(i,j-t)$ for all $i\in[\ell]$ and $j\in\{t+1,\ldots,\lambda_i\}$. This defines a natural injection $\eta\colon\mathcal B_\mu^{n}\to\mathcal B_\lambda^n$. Note that a tableau in $\mathcal B_\lambda^n$ is in the image of $\eta$ if and only if it contains the entry $i$ in cell $(i,j)$ for every $i\in[\ell]$ and every $j\in[t]$. It follows from the combinatorial description of the crystal partial order that $\eta(\mathcal B_{\mu}^{n})$ is a principal order ideal of $\mathcal B_\lambda^n$. To prove the desired result, we must show that for $T,T'\in\mathcal B_{\mu}^{n}$, we have $T\leq T'$ if and only if $\eta(T)\leq\eta(T')$.

Observing that $\eta\circ F_i=F_{i}\circ\eta$ for all $i$, we see that $\eta$ is order-preserving. Now assume $T,T'\in\mathcal B_\mu^{n}$ are such that $\eta(T)\leq\eta(T')$. Since $\eta(\mathcal B_{\mu}^{n})$ is an order ideal of $\mathcal B_\lambda^n$, there exist $T_1,\ldots,T_r$ such that $T=T_1$, $T'=T_r$, and $\eta(T_k)\lessdot\eta(T_{k+1})$ for all $k\in[r-1]$. Fix $k\in[r-1]$. There exists $i\in[n]$ such that $\eta(T_{k+1})=F_i(\eta(T_k))=\eta(F_i(T_k))$. Then $T_{k+1}=F_{i}(T_k)$, so $T_k\lessdot T_{k+1}$. As $k$ was arbitrary, we must have $T_1\lessdot\cdots\lessdot T_r$. Therefore, $T\leq T'$. 
\end{proof}

The next lemma follows from the fact, which is straightforward to verify, that $\mathcal B_\lambda^n$ is principal order ideal of $\mathcal B_\lambda^{n+1}$. 

\begin{lemma}\label{lem:largern}
Let $\lambda$ be a partition with at most $n$ parts. Suppose $n'\geq n$. If $\mathcal B_\lambda^n$ is not a lattice, then $\mathcal B_\lambda^{n'}$ is not a lattice. 
\end{lemma}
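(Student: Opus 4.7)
The plan is twofold: first verify the parenthetical claim that $\mathcal B_\lambda^n$ embeds as a principal order ideal of $\mathcal B_\lambda^{n+1}$, and then deduce the lattice assertion by a short argument about meets and joins, iterated over $n'-n$.

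For the embedding, I would take $\iota\colon \mathcal B_\lambda^n \to \mathcal B_\lambda^{n+1}$ to be the identity on tableaux (valid because any tableau with entries in $[n+1]$ is also one with entries in $[n+2]$). The combinatorial description of $F_i$ given in \Cref{sec:type_a} depends only on the positions of entries equal to $i$ and $i+1$, so $\iota\circ F_i = F_i\circ\iota$ for every $i\in[n]$; in particular, $\iota$ sends cover relations to cover relations. The image $\iota(\mathcal B_\lambda^n)$ is exactly the set of tableaux in $\mathcal B_\lambda^{n+1}$ in which the entry $n+2$ does not appear, and it is an order ideal: the only cover relation that can create a new $n+2$ is of color $n+1$, so if $T\in\iota(\mathcal B_\lambda^n)$ and $T'\lessdot T$, the color of the cover lies in $[n]$ and $T'$ also avoids $n+2$. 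Let $T_{\max}^n$ denote the (unique) maximal element of $\mathcal B_\lambda^n$, whose cell $(i,j)$ holds $n+1+i-\lambda'_j$, where $\lambda'$ is the conjugate of $\lambda$. Then $T_{\max}^n$ avoids $n+2$, so $\iota(\mathcal B_\lambda^n)$ equals the principal order ideal of $\mathcal B_\lambda^{n+1}$ generated by $\iota(T_{\max}^n)$.

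For the lemma itself, by induction on $n'-n$ it suffices to treat the case $n'=n+1$. Suppose $\mathcal B_\lambda^n$ is not a lattice; then some pair $T,T'\in\mathcal B_\lambda^n$ fails to have either a meet or a join in $\mathcal B_\lambda^n$, and I claim the same failure persists in $\mathcal B_\lambda^{n+1}$ after identifying $T,T'$ with their images under $\iota$. For meets: since $\iota(\mathcal B_\lambda^n)$ is an order ideal of $\mathcal B_\lambda^{n+1}$, every lower bound of $T,T'$ in $\mathcal B_\lambda^{n+1}$ already lies in $\iota(\mathcal B_\lambda^n)$, so the two posets share the same lower bounds and the meet fails in either simultaneously. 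For joins: any upper bound of $T,T'$ in $\mathcal B_\lambda^{n+1}$ either lies in $\iota(\mathcal B_\lambda^n)$ or lies strictly above $\iota(T_{\max}^n)$; in the latter case it cannot be minimal among all upper bounds because $\iota(T_{\max}^n)$ itself is an upper bound strictly below it. Hence the minimal upper bounds of $T,T'$ in $\mathcal B_\lambda^n$ and in $\mathcal B_\lambda^{n+1}$ coincide, and the join also fails in $\mathcal B_\lambda^{n+1}$.

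The only step requiring care is checking that $\iota$ is a genuine poset embedding and that $\iota(\mathcal B_\lambda^n)$ is an order ideal; both reduce to the observation that creating a letter $n+2$ requires an application of $F_{n+1}$, which is immediate from the rule for $F_i$. Given this, neither direction of the argument involves more than tracking where upper and lower bounds can live, so I expect no further obstacles.
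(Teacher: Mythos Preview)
Your embedding argument is fine and matches the paper's (unstated) verification that $\mathcal B_\lambda^n$ sits inside $\mathcal B_\lambda^{n+1}$ as a principal order ideal. The meet argument is also correct. However, the join argument contains a false intermediate claim.

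You assert that ``any upper bound of $T,T'$ in $\mathcal B_\lambda^{n+1}$ either lies in $\iota(\mathcal B_\lambda^n)$ or lies strictly above $\iota(T_{\max}^n)$.'' This is not true: the complement of a principal order ideal is not in general a principal order filter. For a concrete counterexample, take $\lambda=(2)$ and $n=1$, so $T_{\max}^1=\ytableaushort{22}$. In $\mathcal B_{(2)}^2$ the tableau $\ytableaushort{13}$ is an upper bound of $\ytableaushort{11}$ and $\ytableaushort{12}$, lies outside $\iota(\mathcal B_{(2)}^1)$ (it contains a $3$), yet is incomparable to $\ytableaushort{22}$. So your dichotomy fails, and with it the conclusion that the minimal upper bounds in the two crystals coincide.

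The fix is short and uses only the order-ideal property you already established. Suppose $T,T'$ have no join in $\mathcal B_\lambda^n$, so there exist two distinct minimal upper bounds $u_1,u_2\in\iota(\mathcal B_\lambda^n)$. Each $u_i$ remains a minimal upper bound in $\mathcal B_\lambda^{n+1}$: any upper bound $v<u_i$ would satisfy $v\leq u_i\leq T_{\max}^n$, hence $v\in\iota(\mathcal B_\lambda^n)$ (since this is a principal order ideal), contradicting minimality of $u_i$ in the smaller crystal. Thus $T,T'$ have at least two minimal upper bounds in $\mathcal B_\lambda^{n+1}$ and therefore no join there either. This is the standard argument that an interval of a lattice is itself a lattice, run in the contrapositive; it does not require knowing anything about upper bounds outside the ideal.
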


The main idea used in each of the following lemmas is to prove that a crystal $\mathcal B_\lambda^n$ is not a lattice by finding a small ``bowtie'' inside of it.\footnote{We thank Jon McCammond for introducing us to this coutural terminology.} More precisely, we will exhibit distinct tableaux $T_1,T_2,U_1,U_2\in\mathcal B_\lambda^n$ satisfying the following conditions: 
\begin{equation}\label{eq:bowtie}
\begin{aligned}
&T_1\text{ and }T_2\text{ are incomparable and }U_1\text{ and }U_2\text{ are incomparable;} \\ &T_1\lessdot U_1,\quad T_1\leq U_2,\quad T_2\leq U_1,\quad\text{and}\quad T_2\leq U_2.
\end{aligned}
\end{equation}
These conditions immediately imply that $T_1$ and $T_2$ do not have a join in $\mathcal B_\lambda^n$. (It is important that $U_1$ covers $T_1$.)

In what follows, recall that $\lambda=(\lambda_1,\ldots,\lambda_\ell)$ is a partition with $\ell$ parts, where $\ell\leq n$.

\begin{lemma}\label{lem:A}
If there exists $p\in[\ell-2]$ such that $\lambda_p>\lambda_{p+1}\geq\lambda_{p+2}\geq 2$, then $\mathcal B_\lambda^n$ is not a lattice. 
\end{lemma}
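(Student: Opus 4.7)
My plan is to exhibit a bowtie in $\mathcal{B}_\lambda^n$ satisfying \eqref{eq:bowtie}. First I reduce: by \Cref{lem:removerows} I may replace $\lambda$ by the three-part consecutive subpartition $(\lambda_p, \lambda_{p+1}, \lambda_{p+2})$; by \Cref{lem:removecolumns} applied with $t = \lambda_{p+2} - 2$ I may further replace this by $(a, b, 2)$ where $a := \lambda_p - \lambda_{p+2} + 2 > b := \lambda_{p+1} - \lambda_{p+2} + 2 \geq 2$; and by \Cref{lem:largern} I may take $n = 3$. It therefore suffices to produce a bowtie in $\mathcal{B}_{(a,b,2)}^3$ for every pair $a > b \geq 2$.

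Inside this crystal I would take $T_1$ to be the tableau with rows $(1^{a-2}, 2, 2)$, $(2^{b-1}, 3)$, $(3, 4)$ and $T_2$ to be the tableau with rows $(1^{a-1}, 2)$, $(2^b)$, $(3, 3)$. I would set $U_1 := F_2(T_1)$, which changes the entry at cell $(1,a)$ from $2$ to $3$, and $U_2 := F_2(F_3(T_1))$, where $F_3$ first changes $(3,1)$ from $3$ to $4$ and then $F_2$ changes $(1,a)$ from $2$ to $3$; so $U_1$ and $U_2$ have rows $(1^{a-2}, 2, 3)$, $(2^{b-1}, 3)$, $(3, 4)$ and $(1^{a-2}, 2, 3)$, $(2^{b-1}, 3)$, $(4, 4)$, respectively.

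The cover $T_1 \lessdot U_1$ holds by construction. Using the parenthesis rule from \Cref{sec:type_a}, I would exhibit explicit chains of covers to verify $T_1 \leq U_2$ (length $2$, using $F_3$ then $F_2$), $T_2 \leq U_1$ (length $4$, using $F_3, F_2, F_2, F_1$ in that order), and $T_2 \leq U_2$ (length $5$, using $F_3, F_3, F_2, F_2, F_1$). Each individual step is a routine computation of the rightmost unmatched closing parenthesis in the appropriate restricted reading word.

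The main obstacle is verifying the two incomparabilities. The weight difference $\mathrm{wt}(T_2) - \mathrm{wt}(T_1) = \alpha_1 + \alpha_2 + \alpha_3$ rules out $T_1 \leq T_2$ outright and forces any candidate chain $T_2 \leq T_1$ to have length $3$ using exactly one of each of $F_1, F_2, F_3$; I would case-check the six orderings and confirm that none produces $T_1$. The crucial obstruction is that in $T_2$ and in every descendant reached via these operators, the row-$2$ twos are parenthesis-paired with the threes in row $2$ (once created) and in row $3$, so the rightmost unmatched closing parenthesis of $F_2$ always lies at the final cell $(1, a)$ of row $1$ rather than at cell $(2, b)$; thus the extra $3$ that $T_1$ carries in row $2$ can never be produced. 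A parallel argument, using that $\mathrm{wt}(U_1) - \mathrm{wt}(U_2) = \alpha_3$ but $F_3(U_1)$ necessarily changes cell $(1,a)$ rather than cell $(3,1)$, shows $U_1 \not\leq U_2$. Once this parenthesis bookkeeping is carried out, $(T_1, T_2, U_1, U_2)$ satisfies \eqref{eq:bowtie}, so $T_1$ and $T_2$ have no join and $\mathcal{B}_{(a,b,2)}^3$ fails to be a lattice.
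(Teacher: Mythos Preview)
Your proof is correct and follows essentially the same strategy as the paper: reduce via \Cref{lem:removerows} and \Cref{lem:removecolumns} to shape $(a,b,2)$ with $a>b\geq 2$, then exhibit a bowtie satisfying \eqref{eq:bowtie}. The paper chooses slightly different tableaux (with $4$'s filling the tail of the first row), which makes the chains $T_2\leq U_1$ and $T_2\leq U_2$ shorter and the incomparability of $T_1,T_2$ a one-step check rather than a six-case analysis, but your choice works as well.
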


\begin{proof}
By \Cref{lem:removerows}, it suffices to prove the result when $\ell=3$. Assuming $\ell=3$, we can then use \Cref{lem:removecolumns} to see that we may further assume $\lambda_3=2$. Thus, assume $\lambda=(\lambda_1,\lambda_2,2)$, where $\lambda_1>\lambda_2$. Consider the tableaux $T_1,T_2,U_1,U_2$ of shape $\lambda$ defined as follows:
\begin{itemize}
    \item $T_1$ has rows $(1^{\lambda_2-1},2,2,4^{\lambda_1-\lambda_2-1})$, $(2^{\lambda_2-1},3)$, $(3,4)$;
    \item $T_2$ has rows $(1^{\lambda_2},3,4^{\lambda_1-\lambda_2-1})$, $(2^{\lambda_2})$, $(3,4)$;
    \item $U_1$ has rows $(1^{\lambda_2-1},2,3,4^{\lambda_1-\lambda_2-1})$, $(2^{\lambda_2-1},3)$, $(3,4)$; 
    \item $U_2$ has rows $(1^{\lambda_2-1},2,3,4^{\lambda_1-\lambda_2-1})$, $(2^{\lambda_2-1},3)$, $(4,4)$. 
\end{itemize} These tableaux satisfy the conditions in \eqref{eq:bowtie}. Indeed, it is straightforward to check that $T_1$ and $T_2$ are incomparable and that $U_1$ and $U_2$ are incomparable. We also have $U_1=F_2(T_1)$, $U_1=F_1F_2(T_2)$, $U_2=F_2F_3(T_1)$, and $U_2=F_1F_2F_3(T_2)$. For example, \Cref{fig:A} illustrates the relevant subposet of the Hasse diagram of $\mathcal B_\lambda^n$ when $\lambda=(5,2,2)$.
\end{proof}

\begin{figure}[ht]
\begin{tikzpicture}[scale=3]
\node[label=left:{$U_1=$}] (123442334) at (0,1.6) {\begin{ytableau}
1 & 2 & 3 & 4 & 4 \\
2 & 3 \\
3 & 4
\end{ytableau}};
\node[label=left:{$T_1=$}] (122442334) at (0,0.8) {\begin{ytableau}
1 & 2 & 2 & 4 & 4 \\
2 & 3 \\
3 & 4
\end{ytableau}};
\node[label=left:{$U_2=$}] (123442344) at (2.5,2.4) {\begin{ytableau}
1 & 2 & 3 & 4 & 4 \\
2 & 3 \\
4 & 4
\end{ytableau}};
\node (113442344) at (2.5,1.6) {\begin{ytableau}
1 & 1 & 3 & 4 & 4 \\
2 & 3 \\
4 & 4
\end{ytableau}};
\node (113442244) at (2.5,0.8) {\begin{ytableau}
1 & 1 & 3 & 4 & 4 \\
2 & 2 \\
4 & 4
\end{ytableau}};
\node[label=left:{$T_2=$}] (113442234) at (2.5,0) {\begin{ytableau}
1 & 1 & 3 & 4 & 4 \\
2 & 2 \\
3 & 4
\end{ytableau}};
\node (113442334) at (1.25,0.8) {\begin{ytableau}
1 & 1 & 3 & 4 & 4 \\
2 & 3 \\
3 & 4
\end{ytableau}};
\node (122442344) at (1.25,1.6) {\begin{ytableau}
1 & 2 & 2 & 4 & 4 \\
2 & 3 \\
4 & 4
\end{ytableau}};
\draw [->,blue] (122442334) -- (123442334)  node [midway, left] {$F_2$};
\draw [->,red] (113442344) -- (123442344)  node [midway, right] {$F_1$};
\draw [->,blue] (113442244) -- (113442344)  node [midway, right] {$F_2$};
\draw [->,green] (113442234) -- (113442244)  node [midway, right] {$F_3$};
\draw [->,blue] (113442234) -- (113442334)  node [midway, right,xshift=1em] {$F_2$};
\draw [->,red] (113442334) -- (123442334)  node [pos=0.3, right,xshift=0.5em] {$F_1$};
\draw [->,green] (122442334) -- (122442344)  node [pos=0.3, left,xshift=-1em] {$F_3$};
\draw [->,blue] (122442344) -- (123442344)  node [midway, left,xshift=-1em] {$F_2$};
\end{tikzpicture}
\caption{Illustration of the proof of \Cref{lem:A}.}
\label{fig:A}
\end{figure}
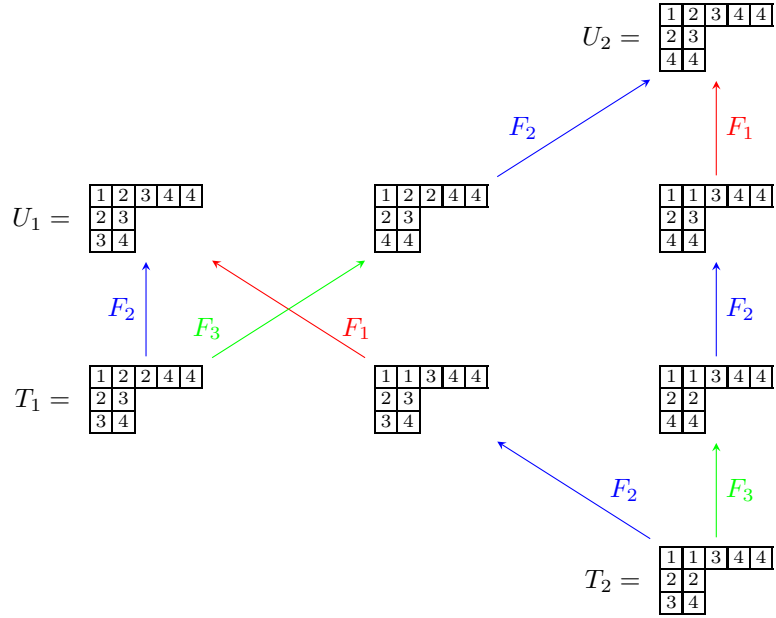

\begin{lemma}\label{lem:B}
If there exists $p\in[\ell-2]$ such that $\lambda_p-2\geq\lambda_{p+1}\geq \lambda_{p+2}$, then $\mathcal B_\lambda^n$ is not a lattice. 
\end{lemma}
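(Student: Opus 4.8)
The plan is to use the reduction lemmas to collapse to a single three-row shape whose bottom row has length one, and then to exhibit an explicit bowtie satisfying \eqref{eq:bowtie}.

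First I would invoke the hypothesis to fix $p\in[\ell-2]$ with $\lambda_p-2\ge\lambda_{p+1}\ge\lambda_{p+2}$; note $\lambda_{p+2}\ge 1$ since $p+2\le\ell$. Applying \Cref{lem:removerows} to the consecutive subpartition $(\lambda_p,\lambda_{p+1},\lambda_{p+2})$ reduces us to $\ell=3$: it suffices to show $\mathcal B_\mu^{n'}$ is not a lattice, where $\mu=(\mu_1,\mu_2,\mu_3)$ satisfies $\mu_1-2\ge\mu_2\ge\mu_3\ge1$ and $n'=n-\ell+3\ge 3$. Then \Cref{lem:removecolumns} with $t=\mu_3-1$ passes to the shape $(a,b,1)$ with $a=\mu_1-\mu_3+1$ and $b=\mu_2-\mu_3+1$; the inequalities $a-b=\mu_1-\mu_2\ge 2$ and $b\ge1$ survive. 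Hence it is enough to prove that $\mathcal B_{(a,b,1)}^{n'}$ is not a lattice whenever $a\ge b+2$ and $b\ge 1$.

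For the bowtie I would take the four tableaux of shape $(a,b,1)$ given by their rows: $T_1$ has rows $(1^{a-1},4),(2^{b}),(4)$; $T_2$ has rows $(1^{a-1},3),(2^{b}),(4)$; $U_1$ has rows $(1^{a-1},4),(2^{b-1},3),(4)$; and $U_2$ has rows $(1^{a-2},3,4),(2^{b}),(4)$. (When $(a,b)=(3,1)$ these are $\begin{ytableau}1&1&4\\2\\4\end{ytableau}$, $\begin{ytableau}1&1&3\\2\\4\end{ytableau}$, $\begin{ytableau}1&1&4\\3\\4\end{ytableau}$, $\begin{ytableau}1&3&4\\2\\4\end{ytableau}$.) Using the parenthesization rule one verifies $U_1=F_2(T_1)$ as a single lowering step, so $T_1\lessdot U_1$; and $U_2=F_2F_1(T_1)$, $U_1=F_3F_2(T_2)$, $U_2=F_3F_2F_1(T_2)$, which give $T_1\le U_2$, $T_2\le U_1$, and $T_2\le U_2$. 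Combined with the two incomparabilities below, this is exactly \eqref{eq:bowtie}, so $T_1$ and $T_2$ have no join and $\mathcal B_{(a,b,1)}^{n'}$ is not a lattice.

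The only delicate point is checking, via the parenthesization rule, that each displayed word acts on the intended cell and that no shorter relation collapses the bowtie; the governing mechanism is the interaction between the entry $4$ in cell $(3,1)$ and the entry in cell $(1,a)$. The incomparability of $T_1$ and $T_2$ rests on $F_3(T_2)=0$: in $T_2$ the entry $4$ in cell $(3,1)$ is the unique open parenthesis and it is matched by the $3$ in cell $(1,a)$, so that $3$ cannot be promoted (and $T_1,T_2$ agree off cell $(1,a)$, where $T_2$ is smaller, so no crystal relation holds in either direction). The relation $T_2\le U_1$ works precisely because promoting a row-$2$ entry to $3$—what $F_2$ does—contributes a closing parenthesis read before cell $(1,a)$, which "steals" the open parenthesis coming from cell $(3,1)$ and thereby unblocks cell $(1,a)$ for the subsequent $F_3$. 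Finally, since each $F_i$ raises a single entry by one, $T\le T'$ forces $T\le T'$ entrywise; as $U_2$ has the larger entry in cell $(1,a-1)$ while $U_1$ has the larger entry in cell $(2,b)$, neither dominates the other, so $U_1$ and $U_2$ are incomparable. Because every tableau that appears uses only the entries $1,2,3,4$, these computations are literally the same for every $n'\ge 3$, so no separate appeal to \Cref{lem:largern} is required.
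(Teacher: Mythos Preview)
Your argument is correct and follows essentially the same approach as the paper: the same reductions via \Cref{lem:removerows} and \Cref{lem:removecolumns} to the shape $(a,b,1)$ with $a\ge b+2$, the same bowtie template \eqref{eq:bowtie}, and even the same crystal words $U_1=F_2(T_1)$, $U_1=F_3F_2(T_2)$, $U_2=F_2F_1(T_1)$, $U_2=F_3F_2F_1(T_2)$. The only difference is the specific tableaux chosen---the paper uses a bowtie near the top of the crystal (first row $(1,1,3^{b-1},4^{a-b-1})$, second row $(2,4^{b-1})$) while yours sits lower (first row $(1^{a-1},4)$, second row $(2^b)$); your choice is a bit simpler to verify, but the mechanism is identical.
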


\begin{proof}
By \Cref{lem:removerows}, it suffices to prove the result when $\ell=3$. Assuming $\ell=3$, we can then use \Cref{lem:removecolumns} to see that we may further assume $\lambda_3=1$. Thus, assume $\lambda=(\lambda_1,\lambda_2,1)$, where $\lambda_1-2\geq\lambda_2$. Consider the tableaux $T_1,T_2,U_1,U_2$ of shape $\lambda$ defined as follows:
\begin{itemize}
    \item $T_1$ has rows $(1,1,3^{\lambda_2-1},4^{\lambda_1-\lambda_2-1})$, $(2,4^{\lambda_2-1})$, $(4)$;
    \item $T_2$ has rows $(1,1,3^{\lambda_2},4^{\lambda_1-\lambda_2-2})$, $(2,4^{\lambda_2-1})$, $(4)$;
    \item $U_1$ has rows $(1,1,3^{\lambda_2-1},4^{\lambda_1-\lambda_2-1})$, $(3,4^{\lambda_2-1})$, $(4)$; 
    \item $U_2$ has rows $(1,3^{\lambda_2},4^{\lambda_1-\lambda_2-1})$, $(2,4^{\lambda_2-1})$, $(4)$. 
\end{itemize} These tableaux satisfy the conditions in \eqref{eq:bowtie}. Indeed, it is straightforward to check that $T_1$ and $T_2$ are incomparable and that $U_1$ and $U_2$ are incomparable. We also have $U_1=F_2(T_1)$, $U_1=F_3F_2(T_2)$, $U_2=F_2F_1(T_1)$, and $U_2=F_3F_2F_1(T_2)$. For example, \Cref{fig:B} illustrates the relevant subposet of the Hasse diagram of $\mathcal B_\lambda^n$ when $\lambda=(6,3,1)$.
\end{proof}

\begin{figure}[ht]
\begin{tikzpicture}[scale=3]
\node[label=left:{$U_1=$}] (123442334) at (0,1.6) {\begin{ytableau}
1 & 1 & 3 & 3 & 4 & 4 \\
3 & 4 & 4 \\
4
\end{ytableau}};
\node[label=left:{$T_1=$}] (122442334) at (0,0.8) {\begin{ytableau}
1 & 1 & 3 & 3 & 4 & 4 \\
2 & 4 & 4 \\
4
\end{ytableau}};
\node[label=left:{$U_2=$}] (123442344) at (2.5,2.4) {\begin{ytableau}
1 & 3 & 3 & 3 & 4 & 4 \\
2 & 4 & 4 \\
4
\end{ytableau}};
\node (113442344) at (2.5,1.6) {\begin{ytableau}
1 & 3 & 3 & 3 & 3 & 4 \\
2 & 4 & 4 \\
4
\end{ytableau}};
\node (113442244) at (2.5,0.8) {\begin{ytableau}
1 & 2 & 3 & 3 & 3 & 4 \\
2 & 4 & 4 \\
4
\end{ytableau}};
\node[label=left:{$T_2=$}] (113442234) at (2.5,0) {\begin{ytableau}
1 & 1 & 3 & 3 & 3 & 4 \\
2 & 4 & 4 \\
4
\end{ytableau}};
\node (113442334) at (1.25,0.8) {\begin{ytableau}
1 & 1 & 3 & 3 & 3 & 4 \\
3 & 4 & 4 \\
4
\end{ytableau}};
\node (122442344) at (1.25,1.6) {\begin{ytableau}
1 & 2 & 3 & 3 & 4 & 4 \\
2 & 4 & 4 \\
4
\end{ytableau}};
\draw [->,blue] (122442334) -- (123442334)  node [midway, left] {$F_2$};
\draw [->,green] (113442344) -- (123442344)  node [midway, right] {$F_3$};
\draw [->,blue] (113442244) -- (113442344)  node [midway, right] {$F_2$};
\draw [->,red] (113442234) -- (113442244)  node [midway, right] {$F_1$};
\draw [->,blue] (113442234) -- (113442334)  node [midway, right,xshift=1em] {$F_2$};
\draw [->,green] (113442334) -- (123442334)  node [pos=0.3, right,xshift=0.5em] {$F_3$};
\draw [->,red] (122442334) -- (122442344)  node [pos=0.3, left,xshift=-1em] {$F_1$};
\draw [->,blue] (122442344) -- (123442344)  node [midway, left,xshift=-1em] {$F_2$};
\end{tikzpicture}
\caption{Illustration of the proof of \Cref{lem:B}.}
\label{fig:B}
\end{figure}
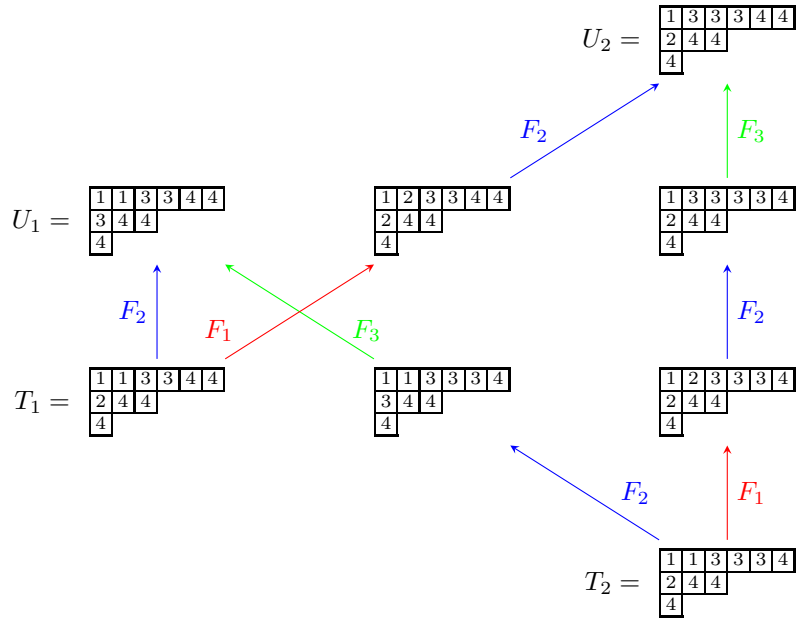

\begin{lemma}\label{lem:C}
If there exists $p\in[\ell-2]$ such that $\lambda_p\geq\lambda_{p+1}\geq\lambda_{p+2}+2$, then $\mathcal B_\lambda^n$ is not a lattice. 
\end{lemma}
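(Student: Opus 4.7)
The plan is to follow the same template as \Cref{lem:A} and \Cref{lem:B}: first apply the reduction lemmas to simplify the partition, then exhibit an explicit bowtie configuration $T_1,T_2,U_1,U_2$ satisfying~\eqref{eq:bowtie}.

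First, I would use \Cref{lem:removerows} to reduce to the case $\ell=3$, so that $\lambda=(\lambda_1,\lambda_2,\lambda_3)$ with $\lambda_1\geq \lambda_2\geq \lambda_3+2$. Since $\ell=3$ forces $\lambda_3\geq 1$, I can then apply \Cref{lem:removecolumns} to strip off the first $\lambda_3-1$ columns, normalizing to $\lambda_3=1$. Thus I may assume $\lambda=(\lambda_1,\lambda_2,1)$ with $\lambda_1\geq \lambda_2\geq 3$, and in particular all entries in a bowtie can be taken from $\{1,2,3,4\}$.

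The main step is to write down four tableaux $T_1,T_2,U_1,U_2\in \mathcal B_\lambda^n$ of the reduced shape, verifying the bowtie conditions. Guided by \Cref{lem:A} and \Cref{lem:B}, I would try configurations where the essential ``choices'' are concentrated at cells $(2,\lambda_2)$ and $(3,1)$, exploiting the bottom gap $\lambda_2-\lambda_3\geq 2$ (whereas \Cref{lem:B} exploited the top gap $\lambda_1-\lambda_2\geq 2$). A natural candidate mirrors \Cref{lem:B}: take $T_1$ and $T_2$ so that one upper bound $U_1$ covers $T_1$ via a single $F_i$ step in row 2 or row 3, while the other upper bound $U_2$ is forced to make a different ``first'' move that cannot be undone within the crystal order. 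After specifying the four tableaux, the verifications are: (i) incomparability of $\{T_1,T_2\}$ and of $\{U_1,U_2\}$, which can be checked by componentwise comparison of cell entries; and (ii) the four order relations $T_1\lessdot U_1$, $T_1\leq U_2$, $T_2\leq U_1$, $T_2\leq U_2$, each witnessed by an explicit sequence of lowering operators $F_i$ and in the case of $T_1\lessdot U_1$ by the observation that the sequence has length one.

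The main obstacle is guessing the correct combinatorial pattern for the four tableaux; the challenge is to find a common upper region of the crystal reachable from $T_1$ and $T_2$ in two genuinely incomparable ways. A useful sanity check is the minimum case $\lambda=(3,3,1)$ with $n=3$, where the bowtie can be verified by hand or with \cite{Sage-Combinat}. A complementary perspective comes from \Cref{prop:dual}: the \Cref{lem:C} hypothesis for $\lambda$ is the natural ``dual'' of the \Cref{lem:B} hypothesis for $\lambda^*$, and since $\mathcal B_\lambda^n$ and $\mathcal B_{\lambda^*}^n$ are isomorphic as unlabeled directed graphs, the nonexistence of a join transfers from one to the other. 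This provides a conceptual guarantee that a bowtie must exist and also suggests its shape, even though expressing it directly in $\mathcal B_\lambda^n$ with entries in $\{1,2,3,4\}$ is ultimately the cleanest presentation.
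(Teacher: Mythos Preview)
Your ``complementary perspective'' at the end is not a sanity check---it is the paper's entire proof. The paper simply observes that the hypothesis $\lambda_p\geq\lambda_{p+1}\geq\lambda_{p+2}+2$ becomes, after passing to $\lambda^*$, the hypothesis $\lambda_{n-p}^*-2\geq\lambda_{n-p+1}^*\geq\lambda_{n-p+2}^*$ of \Cref{lem:B}, and since $\mathcal B_\lambda^n$ and $\mathcal B_{\lambda^*}^n$ are isomorphic as posets by \Cref{prop:dual}, the failure to be a lattice transfers immediately. No reduction lemmas, no tableaux, no bowtie needed.

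By contrast, your primary plan---reduce to $\lambda=(\lambda_1,\lambda_2,1)$ with $\lambda_2\geq 3$ and exhibit an explicit bowtie---is not wrong in spirit, but you never actually write down $T_1,T_2,U_1,U_2$. Saying you ``would try configurations where the essential choices are concentrated at cells $(2,\lambda_2)$ and $(3,1)$'' is not a proof; the whole content of such an argument lies in the specific tableaux and the verification of the $F_i$-chains and incomparabilities. So as written, the explicit-bowtie route is a gap, and the only complete argument in your proposal is the duality one you relegated to an afterthought. Promote it to the main event and delete the rest.
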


\begin{proof}
For this proof, we write $\lambda=(\lambda_1,\ldots,\lambda_{n+1})$ with $\lambda_{i}=0$ for all $i\in\{\ell+1,\ldots,n+1\}$ (in particular, $\lambda_{n+1}=0$). Let $\lambda^* = (\lambda_1^*,\ldots,\lambda_{n+1}^*)$, where $\lambda_i^*=\lambda_1-\lambda_{n+2-i}$. \Cref{prop:dual} tells us that $\mathcal B_{\lambda^*}^n$ is isomorphic to the dual crystal of $\mathcal B_\lambda^n$. The underlying poset structures of $\mathcal B_\lambda^n$ and its dual are isomorphic, so $\mathcal B_\lambda^n$ and $\mathcal B_{\lambda^*}^n$ are isomorphic as posets. The hypothesis of the lemma translates into the inequalities $\lambda_{n-p}^*-2\geq\lambda_{n-p+1}^*\geq\lambda_{n-p+2}^*$, so it follows from \Cref{lem:B} that $\mathcal B_{\lambda^*}^n$ is not a lattice.
\end{proof}

\begin{lemma}\label{lem:D}
If $\ell\geq 4$ and $\lambda$ is of the form $(3^m,2,1^{\ell-m-1})$ for some $m\in[\ell-2]$, then $\mathcal B_\lambda^n$ is not a lattice.  
\end{lemma}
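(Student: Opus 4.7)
The plan is to apply \Cref{lem:removerows} to reduce to one of two small base cases and then exhibit an explicit bowtie, in the sense of~\eqref{eq:bowtie}, in each base case, following the approach of \Cref{lem:A,lem:B,lem:C}.

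Under the hypotheses $\ell \geq 4$ and $1 \leq m \leq \ell - 2$, the partition $\lambda = (3^m, 2, 1^{\ell-m-1})$ always contains one of the two consecutive subpartitions $(3, 2, 1, 1)$ or $(3, 3, 2, 1)$: if $\ell - m - 1 \geq 2$, then $(3, 2, 1, 1)$ appears by taking the last row of length $3$ together with the row of length $2$ and two consecutive rows of length $1$; if instead $\ell - m - 1 = 1$, then $m = \ell - 2 \geq 2$, so two consecutive rows of length $3$ are available and $(3, 3, 2, 1)$ appears. By \Cref{lem:removerows}, combined with \Cref{lem:largern} to lift from $n - \ell + 4$ back to arbitrary $n \geq \ell$, it therefore suffices to prove that both $\mathcal{B}_{(3,2,1,1)}^4$ and $\mathcal{B}_{(3,3,2,1)}^4$ fail to be lattices.

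In each base case I would write down four specific tableaux $T_1, T_2, U_1, U_2$ and verify~\eqref{eq:bowtie} by hand using the combinatorial rules for $F_i$ from \Cref{sec:type_a}. As in \Cref{lem:A,lem:B}, the tableaux will be chosen so that $U_1 = F_i(T_1)$ for some $i$, which supplies the required covering relation $T_1 \lessdot U_1$, while both $U_1$ and $U_2$ are reached from $T_2$ by distinct short sequences of lowering operators, yielding $T_2 \leq U_1$ and $T_2 \leq U_2$. The incomparabilities of $\{T_1, T_2\}$ and $\{U_1, U_2\}$ can then be read off directly by comparing entries cell by cell and noting that the entry differences go in opposite directions in different cells.

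The main obstacle is finding the right four tableaux. Since the shape $(3, 2, 1, 1)$ has both a row of length $2$ and a long left column meeting at the inner corner, I expect the bowtie to arise from exploiting the ambiguity in the parenthesis-pairing rule near this corner: two different ``routes'' of $F_i$ applications can push an entry into the corner cell $(1, 3)$, producing two minimal upper bounds for a suitably chosen incomparable pair whose first rows and first columns disagree in complementary ways. The case $(3, 3, 2, 1)$ should admit an analogous construction that uses the extra row of length $3$ to separate the two paths. Each verification is mechanical once the tableaux are written down; the delicate part is pinning down the correct configuration, which is the only non-routine step in the plan.
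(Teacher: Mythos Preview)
Your reduction is exactly the paper's: use \Cref{lem:removerows} (plus \Cref{lem:largern}) to reduce to showing that $\mathcal B_{(3,2,1,1)}^4$ and $\mathcal B_{(3,3,2,1)}^4$ are not lattices, and your case split on whether $\ell-m-1\geq 2$ or $\ell-m-1=1$ is correct. The only gap is that you stop before producing the explicit tableaux, which you yourself flag as ``the only non-routine step''; the paper simply writes down, for each base case, a single pair of tableaux with no join (verified by hand or computer) rather than a full bowtie quadruple, so you can either supply a bowtie as in \Cref{lem:A,lem:B,lem:E} or follow the paper and exhibit just two joinless tableaux.
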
 

\begin{proof}
By \Cref{lem:removerows}, it suffices to prove the lemma when $\lambda$ is $(3,3,2,1)$ or $(3,2,1,1)$. By \Cref{lem:largern}, it suffices to prove that $\mathcal B_{(3,3,2,1)}^4$ and $\mathcal B_{(3,2,1,1)}^4$ are not lattices. This is straightforward to do by computer or by hand. Indeed, the tableaux \[\raisebox{\height}{\begin{ytableau}
1 & 2 & 2 \\
3 & 3 & 4 \\
4 & 5 \\
5
\end{ytableau}}\quad\text{and} \quad\raisebox{\height}{\begin{ytableau}
1 & 2 & 3 \\
3 & 3 & 4 \\
4 & 5 \\
5
\end{ytableau}}\] have no join in $\mathcal B_{(3,3,2,1)}^4$, while the tableaux \[\raisebox{\height}{\begin{ytableau}
1 & 1 & 3 \\
2 & 5 \\
4 \\
5
\end{ytableau}}\quad\text{and} \quad\raisebox{\height}{\begin{ytableau}
1 & 1 & 4 \\
2 & 5 \\
4 \\
5
\end{ytableau}}\] have no join in $\mathcal B_{(3,2,1,1)}^4$. 
\end{proof}

\begin{lemma}\label{lem:E}
If $2\leq \ell<n$ and $\lambda_2\geq 2$, then $\mathcal B_\lambda^n$ is not a lattice. 
\end{lemma}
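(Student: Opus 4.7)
The plan is to use the three preceding reduction lemmas to reduce the statement to showing that $\mathcal{B}_{(a,2)}^3$ is not a lattice for every $a\geq 2$, and then to exhibit an explicit bowtie of the form \eqref{eq:bowtie} inside that crystal. Since $\ell<n$ gives $n-\ell+2\geq 3$, applying \Cref{lem:removerows} with the consecutive subpartition $\mu=(\lambda_1,\lambda_2)$ identifies an interval of $\mathcal{B}_\lambda^n$ with $\mathcal{B}_\mu^{n-\ell+2}$. Applying \Cref{lem:removecolumns} with $t=\lambda_2-2$ (valid since $t<\lambda_2$) further reduces to $\mathcal{B}_{(a,2)}^{n-\ell+2}$, where $a=\lambda_1-\lambda_2+2\geq 2$. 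Iterating \Cref{lem:largern} finally reduces to $\mathcal{B}_{(a,2)}^3$.

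In $\mathcal{B}_{(a,2)}^3$, I would take the following tableaux of shape $(a,2)$: $T_1$ has rows $(1^{a-1},3)$ and $(3,4)$; $T_2$ has rows $(1^{a-1},2)$ and $(3,4)$; $U_1$ has rows $(1^{a-2},2,3)$ and $(3,4)$; and $U_2$ has rows $(1^{a-1},3)$ and $(4,4)$. A direct parenthesis-matching computation gives $U_1=F_1(T_1)$ and $U_2=F_3(T_1)$ (so in particular $T_1\lessdot U_1$ and $T_1\lessdot U_2$), together with the two-step paths $U_1=F_2F_1(T_2)$ and $U_2=F_2F_3(T_2)$, which give $T_2\leq U_1$ and $T_2\leq U_2$. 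Incomparability of $U_1$ and $U_2$ is immediate componentwise, since $U_1(1,a-1)=2>1=U_2(1,a-1)$ while $U_2(2,1)=4>3=U_1(2,1)$.

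The main obstacle I anticipate is proving that $T_1$ and $T_2$ are incomparable, because they are componentwise comparable ($T_2\leq T_1$ entrywise), ruling out the easy argument used for $(U_1,U_2)$. For this, I would track the invariant that the number of $1$'s in the first row is nonincreasing along any crystal path; since both $T_1$ and $T_2$ have exactly $a-1$ ones in row~$1$, any path from $T_2$ to $T_1$ would have to avoid $F_1$ entirely. But $F_2(T_2)=0$, so such a path would have to begin with $F_3$, which turns the $3$ in cell $(2,1)$ into a $4$; once this happens, cell $(2,1)$ can never decrease back to $3$. This contradicts $T_1(2,1)=3$, so $T_2\not\leq T_1$; the reverse inequality $T_1\not\leq T_2$ fails componentwise since $T_1(1,a)=3>2=T_2(1,a)$.
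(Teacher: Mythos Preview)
Your proof is correct and follows essentially the same approach as the paper: reduce to shape $(a,2)$ and exhibit the same four tableaux as a bowtie (your $U_1,U_2$ are the paper's $U_2,U_1$). The only difference is that you add an extra reduction to $n=3$ via \Cref{lem:largern}, which lets you give a clean argument for the incomparability of $T_1$ and $T_2$; the paper works in $\mathcal B_{(\lambda_1,2)}^n$ directly and simply asserts that incomparability is ``straightforward to check.''
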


\begin{proof}
By \Cref{lem:removerows}, it suffices to prove the statement when $\ell=2$. Assuming $\ell=2$, we can then use \Cref{lem:removecolumns} to see that we may further assume $\lambda_2=2$. Thus, assume $\lambda=(\lambda_1,2)$. Consider the tableaux $T_1,T_2,U_1,U_2$ of shape $\lambda$ defined as follows:
\begin{itemize}
    \item $T_1$ has rows $(1^{\lambda_1-1},3)$ and $(3,4)$;
    \item $T_2$ has rows $(1^{\lambda_1-1},2)$ and $(3,4)$;
    \item $U_1$ has rows $(1^{\lambda_1-1},3)$ and $(4,4)$; 
    \item $U_2$ has rows $(1^{\lambda_1-2},2,3)$ and $(3,4)$. 
\end{itemize} These tableaux satisfy the conditions in \eqref{eq:bowtie}. Indeed, it is straightforward to check that $T_1$ and $T_2$ are incomparable and that $U_1$ and $U_2$ are incomparable. We also have $U_1=F_3(T_1)$, $U_1=F_2F_3(T_2)$, $U_2=F_1(T_1)$, and $U_2=F_2F_1(T_2)$. For example, \Cref{fig:E} illustrates the relevant subposet of the Hasse diagram of $\mathcal B_\lambda^n$ when $\lambda=(5,2)$.
\end{proof}

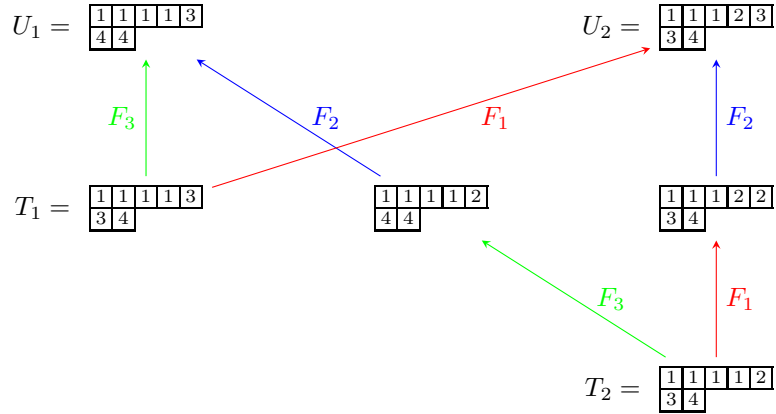
\begin{figure}[ht]
\begin{tikzpicture}[scale=3]
\node[label=left:{$U_1=$}] (123442334) at (0,1.6) {\begin{ytableau}
1 & 1 & 1 & 1 & 3 \\
4 & 4
\end{ytableau}};
\node[label=left:{$T_1=$}] (122442334) at (0,0.8) {\begin{ytableau}
1 & 1 & 1 & 1 & 3 \\
3 & 4
\end{ytableau}};
\node[label=left:{$U_2=$}] (123442344) at (2.5,1.6) {\begin{ytableau}
1 & 1 & 1 & 2 & 3 \\
3 & 4
\end{ytableau}};
\node (113442244) at (2.5,0.8) {\begin{ytableau}
1 & 1 & 1 & 2 & 2 \\
3 & 4
\end{ytableau}};
\node[label=left:{$T_2=$}] (113442234) at (2.5,0) {\begin{ytableau}
1 & 1 & 1 & 1 & 2 \\
3 & 4
\end{ytableau}};
\node (113442334) at (1.25,0.8) {\begin{ytableau}
1 & 1 & 1 & 1 & 2 \\
4 & 4
\end{ytableau}};
\draw [->,blue] (113442244) -- (123442344)  node [midway, right] {$F_2$};
\draw [->,red] (113442234) -- (113442244)  node [midway, right] {$F_1$};
\draw [->,blue] (113442334) -- (123442334)  node [midway, right,xshift=.5em] {$F_2$};
\draw [->,green] (113442234) -- (113442334)  node [midway, right,xshift=.5em] {$F_3$};
\draw [->,red] (122442334) -- (123442344)  node [midway, right,xshift=1.5em] {$F_1$};
\draw [->,green] (122442334) -- (123442334)  node [midway, left] {$F_3$};
\end{tikzpicture}
\caption{Illustration of the proof of \Cref{lem:E}.}
\label{fig:E}
\end{figure}

We can now combine the preceding lemmas to complete the proof of \Cref{thm:main}. 

\begin{proposition}
Let $n\geq 3$, and let $\lambda=(\lambda_1,\ldots,\lambda_\ell)$ be a partition with at most $n$ parts such that $n$ and $\lambda$ satisfy none of the conditions listed in \Cref{thm:main}. The crystal $\mathcal B_\lambda^n$ is not a lattice. 
\end{proposition}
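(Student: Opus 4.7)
The plan is a case analysis that, for each forbidden partition $\lambda$, identifies which of \Cref{lem:A,lem:B,lem:C,lem:D,lem:E} produces a bowtie, possibly after reducing by \Cref{lem:removerows,lem:removecolumns,lem:largern}. The split is by the number of parts $\ell$ and, for $\ell \geq 3$, by whether any consecutive drop $\lambda_i - \lambda_{i+1}$ is at least $2$ and by the value of $\lambda_\ell$.

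First, $\ell \leq 1$ is vacuous since $\lambda$ is then $(1^0)$ or $(k)$, both allowed. For $\ell = 2$ the only forbidden shapes have $\lambda_2 \geq 2$, and since $n \geq 3 > \ell$, \Cref{lem:E} applies directly. For $\ell \geq 3$, \Cref{lem:B} catches any drop of size $\geq 2$ at positions $p \in [\ell - 2]$ and \Cref{lem:C} catches any such drop at positions $p+1$, so together they cover every drop position in $\{1,\ldots,\ell-1\}$. One may therefore assume $\lambda_i - \lambda_{i+1} \in \{0,1\}$ for all $i$.

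Now split on $\lambda_\ell$. When $\lambda_\ell \geq 2$, automatically $\lambda_{p+2} \geq 2$ for every $p \in [\ell - 2]$, so any drop at such a $p$ triggers \Cref{lem:A}; in the remaining subcase, $\lambda_1 = \cdots = \lambda_{\ell - 1}$, forcing $\lambda \in \{(k^\ell),\,(k^{\ell - 1}, k - 1)\}$, which is allowed when $\ell = n$ and dispatched by \Cref{lem:E} when $\ell < n$. When $\lambda_\ell = 1$, let $j^* = \min\{i : \lambda_i = 1\}$. The subcases $j^* = 1$ and $j^* = 2$ give $\lambda = (1^\ell)$ and $\lambda = (2, 1^{\ell - 1})$ (the latter forced by the step condition $\lambda_1 - 1 \leq 1$), both allowed. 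For $j^* \geq 3$, the condition $\lambda_{j^*-1} - \lambda_{j^*} \leq 1$ with $\lambda_{j^*} = 1$ and $\lambda_{j^* - 1} \geq 2$ forces $\lambda_{j^* - 1} = 2$, and the absence of any drop in positions $[1, j^* - 3]$ (else \Cref{lem:A} applies, using $\lambda_{p+2} \geq \lambda_{j^* - 1} = 2$) pins $\lambda_1 = \cdots = \lambda_{j^* - 2} =: c \in \{2, 3\}$. If $c = 2$ then $\lambda = (2^{j^* - 1}, 1^{\ell - j^* + 1})$, allowed as $(2^{n - m}, 1^m)$ when $\ell = n$ and handled by \Cref{lem:E} otherwise. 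If $c = 3$ then $\lambda = (3^{j^* - 2}, 2, 1^{\ell - j^* + 1})$, which \Cref{lem:D} resolves for $\ell \geq 4$; the leftover $\ell = 3$ subcase is $\lambda = (3, 2, 1)$, allowed for $n = 3$ and handled by \Cref{lem:E} for $n \geq 4$.

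The main obstacle is purely exhaustiveness rather than any new construction: one must verify that the drop-size reduction, the $\lambda_\ell$ dichotomy, and the $j^*$ analysis together cover every partition outside the allowed list, and that the $\ell$- and $n$-dependent hypotheses of each invoked lemma are genuinely satisfied in the matching subcase. The built-in symmetry from \Cref{prop:dual} in the statement of \Cref{thm:main} ensures no new forbidden shape appears from dualizing, so the case tree above is complete.
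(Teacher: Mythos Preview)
Your proof is correct. You and the paper use exactly the same five lemmas (A--E) together with the reduction lemmas, so the substance is identical; the difference is purely in how the case analysis is organized. The paper argues by contradiction: it first forces $\lambda_2\geq 2$ (ruling out $(k)$, $(k,1)$, $(2,1^m)$), then invokes \Cref{lem:E} once to pin $\ell=n$, and only afterward whittles down the shape via \Cref{lem:A}, \Cref{lem:B}, \Cref{lem:C}, and \Cref{lem:D}. You instead first apply \Cref{lem:B} and \Cref{lem:C} globally to reduce to partitions whose consecutive drops are all $0$ or $1$, and then split on $\lambda_\ell$ and on the first index $j^*$ where $\lambda$ hits $1$, bringing in \Cref{lem:A}, \Cref{lem:D}, and \Cref{lem:E} at the leaves of the tree. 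Your organization is arguably cleaner---the ``all drops $\leq 1$'' reduction makes the remaining shapes easy to enumerate---while the paper's ordering is a bit shorter because fixing $\ell=n$ early prunes the tree. Either way, the same lemmas do the same work, and your check of each lemma's hypotheses (the bound on $p$, the conditions $\lambda_{p+2}\geq 2$ for \Cref{lem:A}, $\ell\geq 4$ and $m\in[\ell-2]$ for \Cref{lem:D}, and $\ell<n$ with $\lambda_2\geq 2$ for \Cref{lem:E}) is accurate throughout.
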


\begin{proof}
Suppose by way of contradiction that $\mathcal B_\lambda^n$ is a lattice. The partitions of the form $(k)$ are listed in \Cref{thm:main}, so we must have $\ell\geq 2$. If $\lambda_2=1$, then we must have $\lambda_1\geq 3$ and $\ell\geq 3$ since the partitions of the form $(k,1)$ or $(2,1^m)$ are listed in \Cref{thm:main}. However, this contradicts \Cref{lem:B}. Consequently, we must have $\lambda_2\geq 2$. It now follows from \Cref{lem:E} that $\ell=n$. 

Let $m$ be the number of parts in $\lambda$ that are equal to $1$. Because the partition $(1^m)$ is listed in \Cref{thm:main}, we must have $m\leq n-1$. It follows from \Cref{lem:A} that $\lambda$ must be of the form $(k^{n-m},1^m)$ for some $k\geq 2$ or of the form $(r^{n-m-1},k,1^m)$ for some $r>k\geq 2$. Suppose for the moment that $\lambda=(k^{n-m},1^m)$ for some $k\geq 2$. Because the partition $(2^{n-m},1^m)$ is listed in \Cref{thm:main}, we know that $k\geq 3$. We must have $m=n-1$ since, otherwise, we could set $p=n-m-1$ in \Cref{lem:C} and reach a contradiction. Thus, $\lambda=(k,1^{n-1})$. However, this is a contradiction because we saw above that $\lambda_2\geq 1$. 

We deduce from the previous paragraph that $\lambda=(r^{n-m-1},k,1^m)$ for some $r>k\geq 2$. If we had $r-2\geq k$, then we could set $p=n-m-1$ in \Cref{lem:B} to reach a contradiction. This demonstrates that $r=k+1$. Since $((k+1)^{n-1},k)$ is one of the partitions listed in \Cref{thm:main}, $m$ cannot be $0$. If we had $k\geq 3$, then we could set $p=n-m-1$ in \Cref{lem:C} to obtain a contradiction, so we must have $k=2$ and $r=3$. Since $\ell=n$, \Cref{lem:D} now forces us to have $n\leq 3$. This implies that $\lambda=(3,2,1)$ and $n=\ell=3$. However, this is one of the cases listed in \Cref{thm:main}, so we have reached our final contradiction. 
\end{proof}

\section*{Acknowledgements}
N.W. was partially supported by a Simons Foundation Collaboration Grant. C.D. was supported by a Fannie and John Hertz Foundation Fellowship and an NSF Graduate Research Fellowship (grant number DGE-1656466).

\bibliographystyle{amsalpha}
\bibliography{crystal_lattices}

\end{document}